\documentclass[11]{amsart}

\usepackage{amsmath, amstext, amsgen, amsbsy, amsopn, amsfonts, amssymb, graphicx,psfrag}
\usepackage{amsthm}
\usepackage{comment}
\usepackage[normalem]{ulem}
\usepackage{pdfsync}
\usepackage{epstopdf}
\usepackage{color}
\usepackage[thinlines]{easytable}
\usepackage{multirow}
\usepackage{multicol}
\usepackage{hyperref}

\def\Z{\mathbb{Z}}

\newcommand{\semi}{{\rtimes}}

\def\TSG{{\mathrm{TSG}}}
\def\fix{{\mathrm{fix}}}
\def\Aut{{\mathrm{Aut}}}

\newtheorem{theorem}{Theorem}[section]
\newtheorem{lemma}[theorem]{Lemma}

\newtheorem{question}{Question}
\theoremstyle{definition}
\newtheorem*{definition}{Definition}
\newtheorem*{subgroupcor}{Subgroup Theorem}
\newtheorem*{smith}{Smith Theory}
\newtheorem*{finiteorder}{Finite Order Theorem}
\newtheorem*{subgraph}{Subgraph Lemma}

\theoremstyle{remark}



\begin{document}

\title{Topological Symmetry Groups of the Petersen graphs}

\author{Deion Elzie, Samir Fridhi, Blake Mellor, Daniel Silva, and Robin Wilson}
\address{Loyola Marymount University, 1 LMU Drive, Los Angeles, CA 90045}
\email{blake.mellor@lmu.edu}
\address{Loyola Marymount University, 1 LMU Drive, Los Angeles, CA 90045}
\email{robin.wilson@lmu.edu}
\begin{abstract} 
The {\em topological symmetry group} of an embedding $\Gamma$ of an abstract graph $\gamma$ in $S^3$ is the group of automorphisms of $\gamma$ which can be realized by homeomorphisms of the pair $(S^3, \Gamma)$.  These groups are motivated by questions about the symmetries of molecules in space.  The Petersen family of graphs is an important family of graphs for many problems in low dimensional topology, so it is desirable to understand the possible groups of symmetries of their embeddings in space.  In this paper, we find all the groups which can be realized as topological symmetry groups for each of the graphs in the Petersen Family. Along the way, we also complete the classification of the realizable topological symmetry groups for $K_{3,3}$.
\end{abstract}

\date{}
\maketitle

\section{Introduction}\label{S:intro}

In molecular chemistry, the properties of certain molecules are often strongly influenced by their symmetries in space. Molecules with the same chemical structure but whose embeddings in space are not equivalent are called {\bf stereoisomers}, and can often have quite different effects. Historically, chemists have been most interested in rigid symmetries of molecules, induced by rotations and reflections; but as our ability to synthesize molecules advances there is increased interest in symmetries of more ``flexible'' molecules such as DNA and various long polymers.  In these cases, there may be homeomorphisms of space that deform the molecule and map it back to itself, but in a way that cannot be realized by a combination of rotations and reflections. To describe the symmetries of these more complex molecules, Jon Simon \cite{si} introduced the {\bf topological symmetry group} of an embedded graph as the group of automorphisms of the graph induced by homeomorphisms of $S^3$. The topological approach to molecular chemistry has been fruitful, and is an active area for current and future research \cite{f3, ro}.

Since Simon's original paper, considerable work has been done on the topic of topological symmetry groups.  This is a generalization of the study of symmetries and achirality of knots and links, so it is a natural and important problem in low dimensional topology. In many cases, the motivating question is: given an abstract graph (e.g. chemical structure of a molecule), what are the possible topological symmetry groups over all embeddings of the graph in $S^3$? This problem has been solved for several important families of graphs, including complete graphs \cite{cf, cfo, fmn3, fnt},  complete bipartite graphs \cite{hmp} and M\"{o}bius ladders \cite{f1, fl}, as well as for some individually interesting graphs \cite{cfhltv, flw}.  There are also some broad results restricting possible topological symmetry groups for any 3-connected graph \cite{fnpt}.

The Petersen family of graphs, shown in Figure \ref{F:petersen} are an important family of graphs in low-dimensional topology.  Most importantly, they form the complete set of minor-minimal intrinsically linked graphs \cite{rst}. As a result, they are important examples to explore many other issues surrounding linked and knotted cycles in spatial graphs \cite{ni, od}. They include such well-known graphs as $K_6$, the complete graph on 6 vertices, and the Petersen graph itself (which gives the family its name).  The family consists of all graphs related to the Petersen graph by some sequence of $\nabla Y$ or $Y\nabla$ moves (``triangle-Y'' or ``Y-triangle''). A $\nabla Y$ move is an operation in which a triangle in the graph is replaced by a degree-three vertex in the shape of a ``Y", while a $Y\nabla$ move is the reverse, as shown in Figure \ref{F:triangleY}.

\begin{figure} [htbp]
$$\scalebox{.8}{\includegraphics{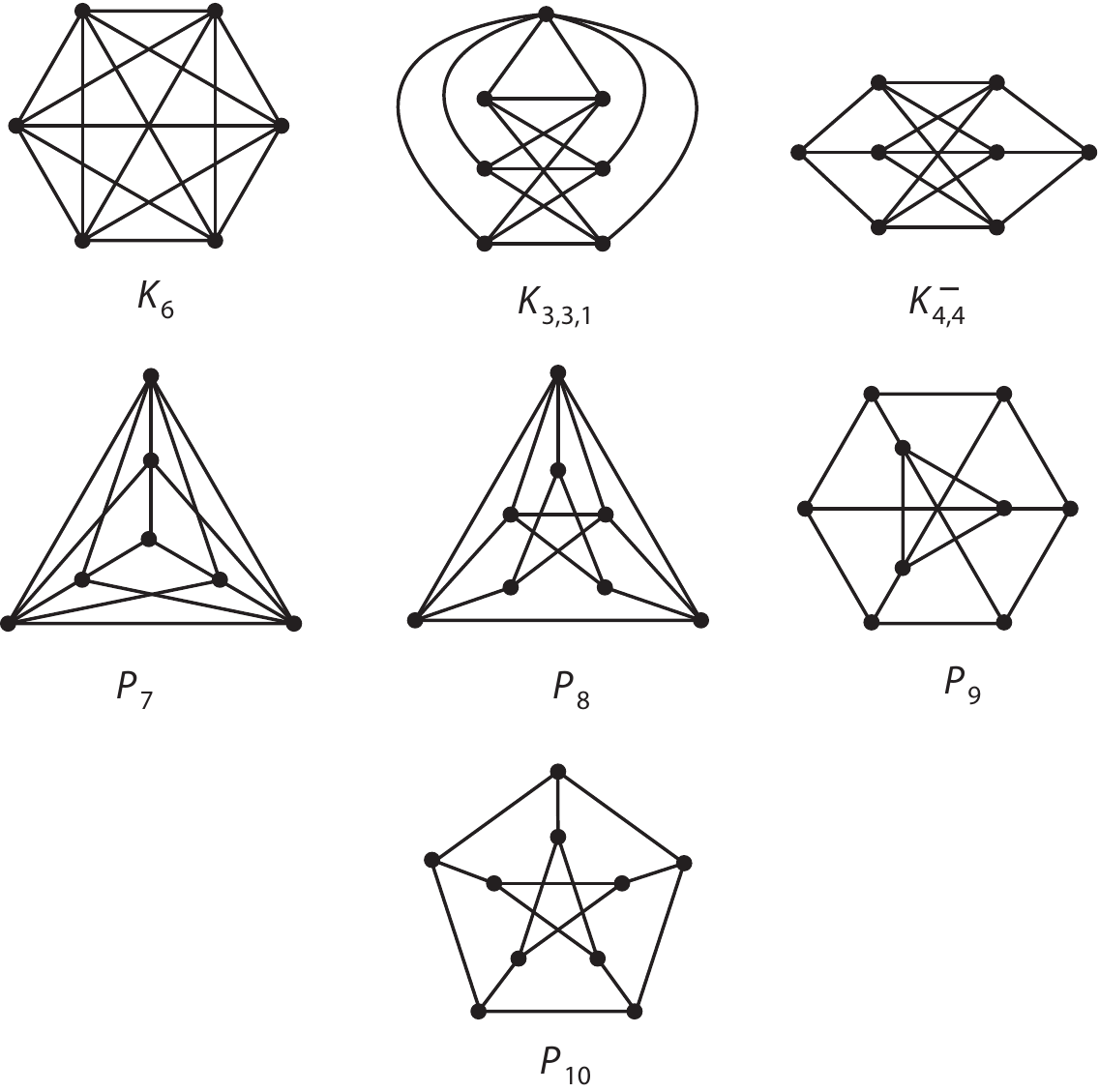}}$$
\caption{The Petersen family of graphs.}
\label{F:petersen}
\end{figure}

\begin{figure} [htbp]
$$\scalebox{.6}{\includegraphics{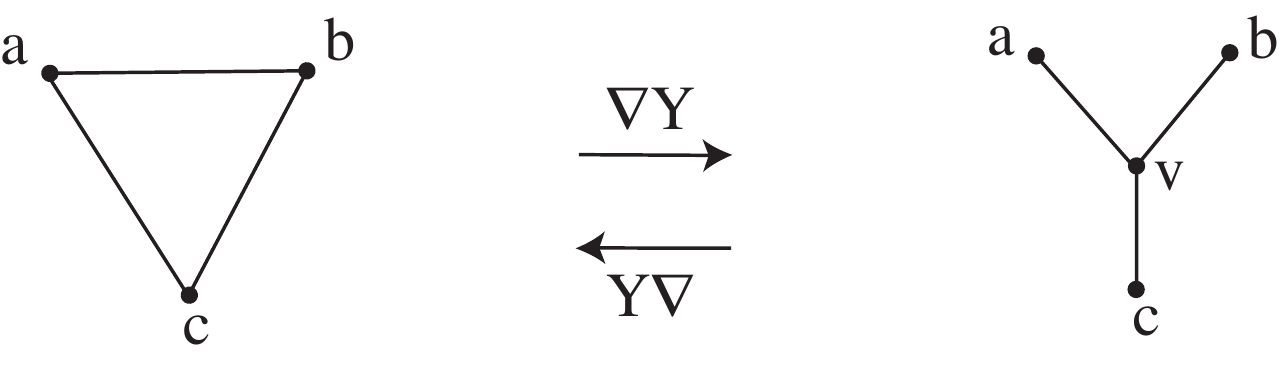}}$$
\caption{$\nabla Y$ and $Y\nabla$ moves.}
\label{F:triangleY}
\end{figure}

Our main purpose in this paper is to determine what groups can occur as topological symmetry groups for some embedding of a graph in the Petersen family.  The groups that can arise as  topological symmetry groups of some embedding of the Petersen graph itself were determined by Chambers, et. al. in \cite{cfhltv}.  Here we build on this result and complete the classification of topological symmetry groups for the remaining graphs in the Petersen family.  Aside from the Petersen graph, which we denote by $P_{10}$, and the complete graph $K_6$, the other graphs in the family are the complete tripartite graph $K_{3,3,1}$, the graph $K_{4,4}$ with an edge removed (which we denote $K_{4,4}^-$), and the graphs we denote $P_7$, $P_8$, $P_9$.

\section{Background and definitions}

Before we begin, we introduce some terminology, and some important tools from other papers.  An {\em abstract graph} (or just a {\em graph}) $\gamma$ is a pair $(V, E)$, where $V$ is a set of vertices and $E \subseteq V \times V$ is a set of edges. Since our graphs are not directed, we will denote the edge between vertices $v$ and $w$ by the unordered pair $\{v,w\}$. An embedding $f: \gamma \rightarrow S^3$ means (1) an embedding of the vertices $V$ in $S^3$ and (2) for each edge $\{v, w\}$, an embedding $f_{v,w}: [0,1] \rightarrow S^3$ such that $f_{v,w}(0) = f(v)$ and $f_{v,w}(1) = f(w)$, and the embeddings of distinct edges intersect only at the endpoints. The image $\Gamma = f(\gamma)$ is called an {\em embedded graph} or a {\em spatial graph}.

An {\em automorphism} of a graph $\gamma$ is a bijection $\alpha: V \rightarrow V$ such that $\{v, w\} \in E$ if and only if $\{\alpha(v), \alpha(w)\} \in E$. The automorphisms of a graph $\gamma$ form a group (the {\em automorphism group}), denoted $\Aut(\gamma)$. To describe automorphism groups (and their subgroups), we will use the following standard terms:
\begin{itemize}
\item The {\em symmetry group} $S_n$ is the group of permutations of a set of $n$ objects.
\item The {\em dihedral group} $D_n$ is the group of size $2n$ with presentation $\langle m, r \mid m^2 = r^n = 1, rm = mr^{-1}\rangle$.
\item The {\em cyclic group} $\Z_n$ is the group of size $n$ with presentation $\langle r \mid r^n = 1\rangle$.
\item The {\em direct product} $G \times H$ is the Cartesian product $G \times H$ with the group operation $(g_1, h_1)(g_2, h_2) = (g_1g_2, h_1h_2)$.
\item The {\em semidirect product} $G \semi H$ is the Cartesian product $G\times H$ with the group operation $(g_1, h_1)(g_2, h_2) = (g_1\phi_{h_1}(g_2), h_1h_2)$, for some homomorphism $\phi: H \rightarrow \Aut(G)$. In our arguments, unless otherwise specified, $H = \Z_2 = \langle r \mid r^2 = 1\rangle$ and $\phi_r(g) = g^{-1}$ for every $g \in G$.
\end{itemize}

A homeomorphism of $S^3$ which takes an embedded graph $\Gamma$ to itself induces an automorphism on the underlying abstract graph. We are interested in which automorphisms can be induced in this way. In what follows, we will refer to a homeomorphism of $S^3$ taking an embedded graph $\Gamma$ to itself, as a homeomorphism of the pair $(S^3,\Gamma)$.

\begin{definition}\label{def1.2} 
Let $\gamma$ be an abstract graph, and let $\Gamma$ be an embedding of $\gamma$ in $S^3$.  We define the {\bf topological symmetry group} $\TSG(\Gamma)$ as the subgroup of $\Aut(\gamma)$ induced by homeomorphisms of $(S^3,\Gamma)$.  We define the {\bf orientation preserving topological symmetry group} $\TSG_+(\Gamma)$ as the subgroup of $\Aut(\gamma)$ induced by orientation preserving homeomorphisms of $(S^3,\Gamma)$.  
\end{definition}

\begin{definition}\label{def1.4} Let  $G$ be a group and let $\gamma$ denote an abstract graph. If there is some embedding $\Gamma$ of $\gamma$ in $S^3$ such that $\TSG(\Gamma)=G$ (resp. $\TSG_+(\Gamma)=G$),  then we say that the group $G$ is {\bf realizable}  (resp. {\bf positively realizable}) for $\gamma$. We will also say that a particular automorphism $\sigma \in \Aut(\gamma)$ is {\bf realizable} (resp. {\bf positively realizable}) if there is a homeomorphism (resp. orientation-preserving homeomorphism) of $(S^3, \Gamma)$ which induces $\sigma$ (for some embedding $\Gamma$).
\end{definition}

Note that for any embedding $\Gamma$ of an abstract graph, $\TSG_+(\Gamma)$ is an index two subgroup of $\TSG(\Gamma)$.  Further, if a group $G$ is positively realizable by some embedding $\Gamma$, then $G$ is also realizable \cite{cf}. This is because we can add identical chiral knots to every edge of the embedding $\Gamma$ to rule out any orientation-reversing homeomorphisms.  This yields a new embedding $\Gamma'$ with $\TSG(\Gamma') = \TSG_+(\Gamma') = \TSG_+(\Gamma) = G$.  As a result, for any graph $\gamma$, we are interested in finding (1) the groups which are positively realizable for $\gamma$ and (2) the groups which are realizable, but not positively realizable, for $\gamma$.

For example, these groups have been found for $K_6$ and the Petersen graph $P_{10}$:

\begin{theorem} \cite{cf}
The groups that can be {\bf positively} realized for the complete graph $K_6$ are:
$$D_6, D_5, D_3, D_2, \Z_6, \Z_5, \Z_3, \Z_2, D_3 \times D_3, D_3 \times \Z_3, \Z_3 \times \Z_3, (\Z_3 \times \Z_3) \semi \Z_2.$$
The groups that can be realized (but not positively realized) for $K_6$ are:
$$D_4, \Z_4, (D_3 \times D_3) \semi \Z_2, (\Z_3 \times \Z_3) \semi \Z_4.$$
\end{theorem}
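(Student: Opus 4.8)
The plan is to handle the two lists in turn: prove that no subgroup of $\Aut(K_6)\cong S_6$ outside them can be a topological symmetry group, and then exhibit embeddings realizing each of the listed groups.

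For the upper bounds, I would first invoke the Finite Order Theorem and the Subgroup Theorem: because $K_6$ is $3$-connected, every group $G=\TSG_+(\Gamma)$ is, after re-embedding $K_6$, induced by an isomorphic finite subgroup of $SO(4)$ acting on $S^3$ and leaving the spatial graph invariant, and $\TSG(\Gamma)$ is likewise induced by a finite subgroup of $O(4)$. This turns the question into a combinatorial one: which subgroups of $S_6$ are compatible with such an orthogonal action? I would then apply Smith Theory to elements of prime order. An orientation-preserving homeomorphism of $S^3$ of prime order has fixed-point set either empty or an unknotted circle, so a vertex fixed by such an automorphism has at most two incident fixed edges and every vertex not on the circle is moved; an orientation-reversing involution has fixed set $S^0$ or $S^2$. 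Running through the cycle structures of order-$2$, $3$, and $5$ permutations of six points — and using that there is no automorphism of order $7$ — these constraints bound $|G|$, forbid order-$4$ and order-$6$ orientation-preserving elements except in rigid configurations, and, most importantly, pin down how the orientation-reversing part of the group must act. Passing to invariant subgraphs (copies of $K_5$, $K_{3,3}$, or the octahedral subgraph $K_{2,2,2}$) via the Subgraph Lemma should then eliminate the remaining ``near-miss'' subgroups of $S_6$, in particular those of order $48$, $120$, $360$, and $720$, and produce exactly the split between the two lists.

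For realizability I would build the largest groups explicitly and read off the smaller ones. The key construction places the vertex triples $\{1,2,3\}$ and $\{4,5,6\}$ on the two components of a Hopf link in $S^3=\{(z,w):|z|^2+|w|^2=1\}$, with $\Z_3\times\Z_3$ acting by the independent rotations $(z,w)\mapsto(\zeta z,w)$ and $(z,w)\mapsto(z,\zeta w)$ for $\zeta=e^{2\pi i/3}$; the six intra-triple edges run along the two circles, the nine inter-triple edges form one free orbit, and this positively realizes $\Z_3\times\Z_3$. Adjoining the coordinate swap $(z,w)\mapsto(w,z)$, which is orientation-preserving and exchanges the components, positively realizes $(\Z_3\times\Z_3)\semi\Z_2$; adjoining instead an order-$4$ orientation-reversing element such as $(z,w)\mapsto(\bar w,z)$ realizes $(\Z_3\times\Z_3)\semi\Z_4$, and replacing the rotational triangle symmetries by full dihedral symmetries of the two components together with the component swap realizes $(D_3\times D_3)\semi\Z_2$; neither of these last two is positively realizable, which again follows from the fixed-point analysis above. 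The remaining positively realizable groups $D_6,\Z_6,D_5,\Z_5,D_3,\Z_3,D_2,\Z_2$, as well as the subtler $D_3\times D_3$ and $D_3\times\Z_3$ (which need an embedding in which a ``flip'' of one triple is arranged to be orientation-preserving on $S^3$), come from rotationally and reflectively symmetric placements of $K_6$, and $D_4,\Z_4$ from orientation-reversing versions.

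The step I expect to be the main obstacle is the completeness of the two lists, together with the positive-versus-nonpositive distinction. Checking that each listed embedding works is mostly a matter of drawing good pictures, but showing the lists are exhaustive requires a delicate many-case argument interlacing the Finite Order Theorem, Smith Theory, and the subgroup lattice of $S_6$ — and $S_6$ is precisely the symmetric group with an exceptional outer automorphism, so one must take care that apparently different ways of embedding a candidate subgroup into $S_6$ are genuinely all accounted for. Determining which order-$36$ group ($D_3\times D_3$ or $(\Z_3\times\Z_3)\semi\Z_4$) is positively realizable, and why $(D_3\times D_3)\semi\Z_2$ is realizable but not positively realizable, is where the real work lies.
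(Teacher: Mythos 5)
This theorem is not proved in the paper at all: it is quoted as a known result of Chambers and Flapan \cite{cf}, so the comparison is really with their proof. Measured against that, your proposal is an outline rather than a proof, and the outline defers exactly the part that constitutes the theorem. You yourself say that the exhaustiveness of the two lists and the positive-versus-nonpositive split is ``where the real work lies,'' but no argument is given for it. Moreover, the reduction you lean on is misattributed: the Finite Order Theorem applies to a single automorphism and does not let you assume that all of $\TSG_+(\Gamma)$ is induced by a finite subgroup of $SO(4)$; that requires the much stronger results of \cite{fnpt} (which is what \cite{cf} actually uses), and even granted that, the elimination of the remaining subgroups of $S_6$ is a substantial case analysis, not a corollary of the prime-order Smith-theory constraints you list. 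Your plan to finish via the Subgraph Lemma also fails as stated: the lemma requires a subgraph invariant under \emph{every} automorphism of the abstract graph, and $K_6$ has no proper subgraph invariant under $\Aut(K_6)=S_6$, so copies of $K_5$, $K_{3,3}$ or $K_{2,2,2}$ cannot be used this way without a different (and unproved) argument.

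There are also concrete problems in the realization half. Adjoining the coordinate swap $(z,w)\mapsto(w,z)$ to the $\Z_3\times\Z_3$ Hopf-link action gives the wreath-type extension in which $\Z_2$ exchanges the two factors; that group is isomorphic to $D_3\times\Z_3$, not to $(\Z_3\times\Z_3)\semi\Z_2$ in the paper's convention, where $\Z_2$ acts by inversion (the generalized dihedral group of order $18$, with nine involutions rather than three). To realize the latter you would instead need an orientation-preserving involution inverting both $3$-rotations, e.g.\ coordinatewise complex conjugation with a compatible placement of the vertices. Similarly, your claims that the symmetric Hopf-link embedding has $\TSG_+$ \emph{equal} to $\Z_3\times\Z_3$ (rather than merely containing it), and that $D_3\times D_3$ is positively realizable while $(D_3\times D_3)\semi\Z_2$ and $(\Z_3\times\Z_3)\semi\Z_4$ are not, are asserted but not established; cutting a symmetry group down to a prescribed subgroup needs the Subgroup Theorem (with an edge having trivial stabilizer) or knotted edges, and the chirality/invertibility bookkeeping for the orientation-reversing elements is precisely the delicate part that \cite{cf} carries out and your sketch leaves open.
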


\begin{theorem} \cite{cfhltv}
The groups that can be {\bf positively} realized for the Petersen graph $P_{10}$ are:
$$D_5, D_3, \Z_5, \Z_3, \Z_2.$$
The groups that can be realized (but not positively realized) for $P_{10}$ are:
$$\Z_5 \semi \Z_4 \text{ and } \Z_4.$$
\end{theorem}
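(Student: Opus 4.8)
The plan begins by recording that $P_{10}$ is $3$-connected and that $\Aut(P_{10})\cong S_5$: use the model of $P_{10}$ whose vertices are the $2$-element subsets of $\{1,\dots,5\}$, with an edge between disjoint subsets, so that $S_5$ acts by relabelling. Hence every realizable group is a subgroup of $S_5$, and I would list these up to conjugacy, being careful that there are two conjugacy classes of subgroups isomorphic to each of $\Z_2$, $\Z_2\times\Z_2$ and $S_3\cong D_3$, according to whether their involutions are transpositions or products of two disjoint transpositions. By the Finite Order Theorem and the structure theory for $3$-connected graphs in \cite{fnpt}, after re-embedding we may assume $\TSG(\Gamma)$ is induced by a finite group of diffeomorphisms of $S^3$ acting by isometries; in particular $\TSG_+(\Gamma)$ embeds in $SO(4)$, which already eliminates $S_5$ (none of its faithful $4$-dimensional representations is orientation-preserving).

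\textbf{Step 2 (Smith-theoretic obstructions for a single automorphism).} The engine of the upper bound is: for each automorphism $\sigma$ of prime-power order, compute the subgraph of $P_{10}$ fixed pointwise and setwise by $\sigma$ and compare with the possibilities for $\fix$ of a finite-order homeomorphism of $S^3$ allowed by Smith theory (a sphere $S^k$ or $\emptyset$, with $k\le 1$ if $\sigma$ is orientation-preserving of order $2$ or of odd prime order, $k\le 2$ if $\sigma$ is an orientation-reversing involution), remembering the local model at a fixed point. Two computations carry most of the weight. First, a transposition $(i\,j)$ fixes the entire star $K_{1,3}$ at the vertex $\{i,j\}$ pointwise; since a $K_{1,3}$ cannot lie in a $1$-manifold, a transposition is never positively realizable, and when realized it must be by an orientation-reversing homeomorphism whose fixed set is an $S^2$ separating $S^3$ into two balls. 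Second, if $\sigma$ has order $4$ then $\sigma^2$ is a product of two disjoint transpositions which fixes an edge of $P_{10}$ pointwise, so, were $\sigma$ orientation-preserving, $\fix(\sigma^2)$ would be a circle $C$ containing that edge and the midpoints of the two further edges $\sigma^2$ fixes setwise; since $\sigma$ interchanges the endpoints of the first edge and the two midpoints, $\sigma$ acts on $C$ as a reflection, forcing $\fix(\sigma)$ to be two isolated points -- impossible, because the derivative of an orientation-preserving finite-order diffeomorphism at an isolated fixed point is an element of $SO(3)$ of the same order, which fixes a line. Hence no order-$4$ automorphism is positively realizable. Together these facts remove as positive candidates every group containing a transposition (so $S_4$, $D_6$, and the transposition-type $\Z_2$, $\Z_2\times\Z_2$, $S_3$) and show that $\Z_4$ and $\Z_5\semi\Z_4$, if realizable at all, are not positively realizable.

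\textbf{Step 3 (eliminating the remaining groups -- the crux).} What is left is to show that $\Z_6$, $D_4$, $D_6$, $A_4$, the Klein $\Z_2\times\Z_2$, $S_4$ and $A_5$ are not realizable at all, and that $\Z_4$ and $\Z_5\semi\Z_4$ are realizable. For non-realizability I would assume the group is induced by an isometric action and exploit the rigid way the fixed-point sets of its elements sit in $S^3$ together with the combinatorics of $P_{10}$. Three sample mechanisms: (i) for a Klein $\Z_2\times\Z_2=\langle a,b\rangle$ with $a$ orientation-preserving and $b,c=ab$ reflections, $a=bc$ gives $\fix(a)=\fix(b)\cap\fix(c)$, yet $\fix(a)$ contains a vertex of $P_{10}$ that $b$ does not fix -- a contradiction (and a similar argument with the midpoint of an invariant edge kills the transposition-type $\Z_2\times\Z_2$); (ii) a cyclic group of order $6$ acts on an invariant hexagonal cycle $H$ of $P_{10}$ so that its order-$2$ element, being a transposition and hence a reflection, acts on $H$ as the antipodal map, but a reflection of $S^3$ fixes a point of every invariant circle -- a contradiction; (iii) for $A_5$ (hence also $S_4$, $A_4$) one uses that $A_5$ has no subgroup of index two, so $\TSG_+=A_5$, and then derives a contradiction from the incompatibility of the fixed circles of the order-$3$, order-$5$ and $(2,2)$-elements inside one $SO(4)$-action, where the Subgraph Lemma and the known topological symmetry groups of subgraphs and topological minors of $P_{10}$ (notably $K_{3,3}$, whose classification this paper completes) are helpful. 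I expect this case analysis -- above all the exclusion of $A_5$, $A_4$ and $D_6$ -- to be where essentially all of the difficulty lies.

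\textbf{Step 4 (constructions).} Finally I would realize each surviving group by an explicit embedding. For $D_5$ and $\Z_5$: build a $D_5$-equivariant embedding by placing the two $5$-orbits of vertices symmetrically about an unknotted axis circle and joining them equivariantly, extending the five ``reflections'' of $D_5\le\Aut(P_{10})$ to $180^\circ$ rotations of $S^3$ so that all of $D_5$ is orientation-preserving, then adding a fixed chiral knot to every edge as in \cite{cf} (which destroys only orientation-reversing homeomorphisms and any extra symmetry); deleting a generator gives $\Z_5$. For $D_3$ and $\Z_3$: use an order-$3$ rotation of $S^3$ about an unknotted circle through the unique fixed vertex, extending the $(2,2)$-type involutions of this $S_3$ to $180^\circ$ rotations for $D_3$. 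For $\Z_2$: use a $180^\circ$ rotation about an unknotted circle running through the two fixed vertices and along the fixed edge of a $(2,2)$-automorphism. For $\Z_4$: use an orientation-reversing order-$4$ symmetry of $S^3$ (a rotoreflection with $\fix=S^0$) realizing a $4$-cycle, placing the $2$-orbit of vertices and the edge joining them on the invariant circle $\fix(h^2)$; for $\Z_5\semi\Z_4$, combine the order-$5$ rotation with such an order-$4$ symmetry normalizing it. In these last two cases one decorates the edges with chiral knots in a pattern compatible with the orientation-reversing generator (so that a knot in an edge $e$ is the mirror of the knot in $h(e)$) to ensure that the only symmetries are the prescribed group, which by Step 2 is realizable but not positively realizable. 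Comparison with Steps 1--3 then shows the two lists in the theorem are exactly as claimed.
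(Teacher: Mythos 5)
A preliminary remark: this statement is not proved in the paper at all --- it is imported verbatim from \cite{cfhltv}, so there is no in-house proof to compare you against. Judged on its own terms, your outline follows the standard template of that literature (Finite Order Theorem, Smith theory, obstructions from pointwise-fixed subgraphs, then equivariant constructions decorated with chiral or amphicheiral knots), and two of your ingredients are essentially sound once phrased carefully: the transposition/$K_{1,3}$ obstruction, and the order-$4$ argument (the invariant fixed edge forces the order-$4$ map to reverse the fixed circle of its square, so its fixed set would be two points, impossible for an orientation-preserving finite-order homeomorphism). Your order-$6$ mechanism (the invariant hexagonal orbit on which the cube of the generator acts freely, versus the fact that a connected invariant set must meet the $S^2$ fixed by that cube) is also a viable complete argument.

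Nevertheless the proposal has genuine gaps exactly where you admit the difficulty lies. First, Step 3 is a plan, not a proof: the exclusion of $A_4$, $S_4$, $A_5$, $D_4$ and both conjugacy types of $\Z_2\times\Z_2$ is handled only by ``sample mechanisms,'' and one of them is unjustified as stated --- for homeomorphisms one only has $\fix(bc)\supseteq\fix(b)\cap\fix(c)$, not equality, so the Klein-group argument needs the whole group to act by finite-order (ideally isometric) homeomorphisms simultaneously. That points to the second gap: the Finite Order Theorem applies to a single automorphism, and the group-level upgrade you invoke in Step 1 rests on \cite{fnpt}, whose realization results concern $\TSG_+$ of $3$-connected graphs and do not automatically cover groups containing orientation-reversing elements; several of your eliminations silently use this unproved reduction. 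Third, the parenthetical ``for $A_5$ (hence also $S_4$, $A_4$)'' is logically backwards: excluding $A_5$ says nothing about $A_4$ or $S_4$; it is the exclusion of $A_4$ that kills $S_4$ and $A_5$, and since the theorem is stated up to isomorphism while your transposition obstruction is conjugacy-class sensitive, you must track both classes of $\Z_2$, $\Z_2\times\Z_2$ and $S_3$ to the end. Finally, Step 4 is unverified: for $D_5$, $\Z_5\semi\Z_4$ and $\Z_4$ you still have to show the decorated embeddings realize exactly the intended group and nothing larger (the standard device being to exhibit a maximal realized group together with an edge not pointwise fixed by any nontrivial element, then invoke the Subgroup Theorem), which is precisely the detailed work carried out in \cite{cfhltv}.
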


We now turn to some of the important tools we will use in this paper. The following result is very useful in showing that certain groups can be realized as a topological symmetry group. (In \cite{fmn1}, this is called the Subgroup Corollary.)

\begin{subgroupcor}\cite{fmn1} 
\label{subgroup}
Let $\Gamma$ be an embedding of a 3-connected graph in $S^3$.  Suppose that $\Gamma$ contains an edge $e$ which is not pointwise fixed by any non-trivial element of $\TSG_+(\Gamma)$.  Then for every $H \leq \TSG_+(\Gamma)$, there is an embedding $\Gamma'$ of $\Gamma$ with $H = \TSG_+(\Gamma')$.
\end{subgroupcor}

The next three results give us powerful tools for restricting which automorphisms or groups can be realized for a particular graph $\gamma$.

\begin{finiteorder}\cite{f2}
Let $\phi$ be a non-trivial automorphism of a 3-connected graph $\gamma$ which is induced by a homeomorphism $h$ of $ (S^3, \Gamma)$ for some embedding $\Gamma$ of $\gamma$. Then there exists another embedding $\Gamma'$ of $\gamma$ such that the automorphism $\phi$ is induced by a finite order homeomorphism $f \in (S^3, \Gamma')$, and $f$ is orientation reversing if and only if $h \in (S^3, \Gamma)$ is orientation reversing. 
\end{finiteorder}

\begin{smith} \cite{sm}
Let $h$ be a non-trivial finite order homeomorphism of $S^3$.  If $h$ is orientation preserving, then $\fix(h)$ is either the empty set or is homeomorphic to $S^1$.  If $h$ is orientation reversing, then $\fix(h)$ is homeomorphic to either $S^0$ (two distinct points) or $S^2$.
\end{smith}

\begin{subgraph}
Let $\gamma$ be an abstract graph and $\gamma'$ be a subgraph of $\gamma$ so that every automorphism of $\gamma$ fixes $\gamma'$ setwise.  Assume that $\Gamma$ is an embedding of $\gamma$ and let $\Gamma'$ be the embedding of $\gamma'$ induced by $\Gamma$.  Then $\TSG(\Gamma) \leq \TSG(\Gamma')$ and $\TSG_+(\Gamma) \leq \TSG_+(\Gamma')$.  
\end{subgraph}
\begin{proof}
Observe that every element in $\TSG(\Gamma)$ is a homeomorphism of $S^3$ taking $\Gamma$ to itself setwise.  Hence, every element in $\TSG(\Gamma)$ also takes $\Gamma'$ to itself setwise and so is an element of $\TSG(\Gamma')$. Thus, $\TSG(\Gamma)$ is a subset of $\TSG(\Gamma')$ that is also a group under the same operation.  The same argument holds for $\TSG_+(\Gamma)$. 
\end{proof}

In the remainder of the paper, we will classify the realizable topological symmetry groups for the remaining graphs in the Petersen family: $K_{3,3,1}$, $K_{4,4}^-$, $P_7$, $P_8$ and $P_9$.  First, however, we will complete the classification of the topological symmetry groups for $K_{3,3}$ begun by Flapan and Lawrence \cite{fl}. Since $K_{3,3}$ is a subgraph of several of the graphs in the Petersen family, this will help in our classification of the other graphs, particularly $K_{3,3,1}$ and $K_{4,4}^-$.

\section{The graph $K_{3,3}$}\label{S:K33}

The graph $K_{3,3}$ is the complete bipartite graph with two sets of three vertices, as shown in Figure \ref{F:K33}. The positively realizable groups were determined by Flapan and Lawrence \cite{fl}, so our primary goals are to review their results, to give a more detailed accounting of the subgroups of $\Aut(K_{3,3})$, and to determine the groups which are realizable but not positively realizable for $K_{3,3}$.

\begin{figure} [htbp]
$$\scalebox{.7}{\includegraphics{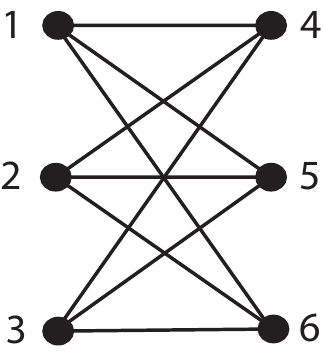}}$$
\caption{The complete bipartite graph $K_{3,3}$.}
\label{F:K33}
\end{figure}

We begin by describing the automorphism group for $K_{3,3}$. Using the labeling in Figure \ref{F:K33}, we can permute the vertices in $\{1, 2, 3\}$ and we can (independently) permute the vertices in $\{4,5,6\}$.  We can also interchange the two sets of vertices.  It is convenient to view these automorphisms as a set of permutations of the six vertices, hence as a subset of $S_6$.  As a result, we have (also see \cite{fl}):
$$\Aut(K_{3,3}) \cong (S_3 \times S_3) \semi \Z_2 = \langle (12), (123), (45), (456), (14)(25)(36) \rangle \subset S_6$$
(This group is sometimes referred to as the {\em wreath product} of $S_3$ by $\Z_2$. Here the semidirect product uses the homomorphism $\phi: \Z_2 \rightarrow \Aut(S_3 \times S_3)$ where $\phi_r(a,b) = (b,a)$ for each $(a,b) \in S_3 \times S_3$.)

\subsection{Subgroups of $\Aut(K_{3,3})$.}\label{SS:K33subgps}  We wish to list (up to isomorphism) the subgroups of $\Aut(K_{3,3})$.  Nikkuni and Taniyama \cite{nt} listed representatives from each (non-trivial) conjugacy class of $\Aut(K_{3,3})$ and determined which classes were realizable by orientation preserving automorphisms of $S^3$ (positively realizable), and which were realizable by orientation reversing automorphisms (negatively realizable).  No automorphisms are in both categories.
\begin{itemize}
	\item[] {\em Positively Realizable:} (12)(45), (14)(25)(36), (123), (123)(456), (142536)
	\item[] {\em Negatively Realizable:} (12), (1425)(36), (12)(456)
\end{itemize}

The following table lists the number of permutations in each conjugacy class:

\begin{center}
\begin{tabular}{|c|c|}
\hline
Conjugacy class & Number of elements \\ \hline
identity & 1 \\
(12) & 6 \\
(12)(45) & 9 \\
(14)(25)(36) & 6 \\
(123) & 4 \\
(123)(456) & 4 \\
(1425)(36) & 18 \\
(142536) & 12 \\
(12)(456) & 12 \\ \hline
Total & 72 \\ \hline
\end{tabular}
\end{center}

To find the subgroups of $\Aut(K_{3,3})$, we begin with the subgroups of $S_6$. Up to isomorphism, there are 29 distinct subgroups (see, for example, \cite{fl}; these can also be generated from a computational group theory program such as GAP).  Recall that $S_3 \cong D_3$; we will use $D_3$ in this list.  We list the groups in order of decreasing size.
$$S_6, A_6, S_5, (D_3 \times D_3) \semi \Z_2, A_5, S_4 \times \Z_2, D_3 \times D_3, (\Z_3 \times \Z_3) \semi \Z_4, S_4, A_4 \times \Z_2,$$
$$\Z_5 \semi \Z_4, (\Z_3\times \Z_3) \semi \Z_2, D_3 \times \Z_3, D_4 \times \Z_2, A_4, D_6, D_5, \Z_3 \times \Z_3, \Z_2 \times \Z_4, \Z_2 \times \Z_2 \times \Z_2,$$
$$D_4, D_3, \Z_6, \Z_5, D_2, \Z_4, \Z_3, \Z_2, id$$
We can immediately eliminate the groups whose order does not divide 72: $S_6$, $A_6$, $S_5$, $A_5$, $S_4 \times \Z_2$, $\Z_5 \semi \Z_4$, $D_4 \times \Z_2$, $D_5$, $\Z_5$.  Next, observe that $\Aut(K_{3,3}) = (D_3 \times D_3) \semi \Z_2$ has 8 elements of order 3, all of which commute.  $S_4$ and $A_4$ also have 8 elements of order 3, but in these groups they do {\em not} all commute.  Hence $\Aut(K_{3,3})$ cannot contain $S_4$, $A_4$, or $A_4 \times \Z_2$ as a subgroup.

We can also observe that the only elements of $\Aut(K_{3,3})$ of order 4 are conjugate to $(1425)(36)$, and it is easy to check that this permutation does not commute with any order two permutation (other than its square $(12)(45)$).  Hence $\Z_2 \times \Z_4$ is not a subgroup of $\Aut(K_{3,3})$.

Finally, we consider the group $\Z_2 \times \Z_2 \times \Z_2$.  This group contains 7 involutions, all of which commute with each other.  There are three conjugacy classes of involutions in $\Aut(K_{3,3})$: $(14)(25)(36)$ does not commute with any other involutions; $(12)(45)$ commutes with only two other involutions (namely $(12)$ and $(45)$); and $(12)$ commutes with six other involutions (namely $(45)$, $(46)$, $(56)$, $(12)(45)$, $(12)(46)$ and $(12)(56)$).  However, the six involutions which commute with $(12)$ do not commute with each other.  So it is impossible to find seven involutions in $\Aut(K_{3,3})$, all of which commute with each other.  Hence $\Z_2 \times \Z_2 \times \Z_2$ is not a subgroup of $\Aut(K_{3,3})$.

The remaining subgroups of $S_6$ can all be realized as subgroups of $\Aut(K_{3,3})$.  Table \ref{Ta:K33} gives a generating set of permutations for a subgroup in each isomorphism class.

\begin{table}[htbp]
\begin{center}
\begin{tabular}{|l|l|}
\hline
Subgroup & Generating set \\ \hline
$(D_3 \times D_3) \semi \Z_2$ & $ (12), (123), (45), (456), (14)(25)(36) $ \\
$(\Z_3 \times \Z_3) \semi \Z_4$ & $ (123), (456), (1425)(36) $\\
$(\Z_3\times \Z_3) \semi \Z_2$ & $ (123), (456), (14)(25)(36) $\\
$D_3 \times D_3$ & $ (12), (123), (45), (456) $\\
$D_3 \times \Z_3$ & $ (12), (123), (456) $ \\
$\Z_3 \times \Z_3$ & $ (123), (456) $ \\
$D_6$ & $ (12)(56), (142536)$ \\
$D_4$ & $ (12), (1425)(36)$ \\
$D_3$ & $ (12), (123) $ \\
$D_2$ & $ (12), (45) $ \\
$\Z_6$ & $ (142536) $ \\
$\Z_4$ & $ (1425)(36) $ \\
$\Z_3$ & $ (123) $ \\
$\Z_2$ & $ (12) $ \\ \hline
\end{tabular}
\end{center}
\caption{Generating sets for subgroups of $\Aut(K_{3,3})$.}
\label{Ta:K33}
\end{table}

\begin{theorem}\label{T:K33subgroups}
The subgroups of $\Aut(K_{3,3}) = (D_3 \times D_3) \semi \Z_2$, up to isomorphism, are
$$(D_3 \times D_3) \semi \Z_2, D_3 \times D_3, (\Z_3 \times \Z_3) \semi \Z_4, (\Z_3\times \Z_3) \semi \Z_2, D_3 \times \Z_3,$$
$$D_6, \Z_3 \times \Z_3, D_4, D_3, \Z_6, D_2, \Z_4, \Z_3, \Z_2, id$$
\end{theorem}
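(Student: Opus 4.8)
The plan is to follow exactly the winnowing-down strategy that the surrounding text has already set up: start from the known list of 29 subgroups of $S_6$ (up to isomorphism), observe that every subgroup of $\Aut(K_{3,3})$ is in particular a subgroup of $S_6$, and then eliminate all the isomorphism types that cannot embed in $\Aut(K_{3,3}) = (D_3 \times D_3) \semi \Z_2$, a group of order $72$. The elimination proceeds in three tiers, each of which has essentially been carried out in the discussion preceding the theorem; the proof of the theorem itself should simply assemble these observations into a clean argument, together with the constructions in Table \ref{Ta:K33} for the converse.

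First I would dispose of the order obstruction: by Lagrange's theorem any subgroup of a group of order $72$ has order dividing $72$, so the nine types $S_6, A_6, S_5, A_5, S_4 \times \Z_2, \Z_5 \semi \Z_4, D_4 \times \Z_2, D_5, \Z_5$ (whose orders $720, 360, 120, 60, 48, 20, 16, 10, 5$ do not divide $72$) are immediately ruled out. Second, I would invoke the element-counting obstructions already noted: $\Aut(K_{3,3})$ contains exactly $8$ elements of order $3$ and they pairwise commute (they lie in the abelian normal subgroup $\Z_3 \times \Z_3$), whereas $S_4$, $A_4$, and hence $A_4 \times \Z_2$ each contain $8$ non-commuting order-$3$ elements, so these three types are excluded; the only order-$4$ elements of $\Aut(K_{3,3})$ are conjugate to $(1425)(36)$, which commutes with no involution other than its own square, ruling out $\Z_2 \times \Z_4$; and there is no set of seven pairwise-commuting involutions in $\Aut(K_{3,3})$ (a case check over the three conjugacy classes of involutions, as in the text), ruling out $\Z_2 \times \Z_2 \times \Z_2$. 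These eliminations account for all $15$ of the types not appearing in the asserted list.

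For the converse, I would simply point to Table \ref{Ta:K33}: each of the remaining $14$ types (together with the trivial group) is exhibited there as an explicit subgroup of $\langle (12),(123),(45),(456),(14)(25)(36)\rangle \subset S_6$, and one verifies directly that the listed generators lie in $\Aut(K_{3,3})$ and generate a subgroup of the claimed isomorphism type — these are finite group-theoretic checks, routine to carry out or to confirm in GAP. Combining the two directions gives the stated classification.

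The main obstacle, such as it is, is not any single hard step but the completeness of the bookkeeping: one must be sure the starting list of $29$ subgroup types of $S_6$ is correct and exhaustive (citing \cite{fl} or a GAP computation), and one must check that the elimination arguments in the second tier genuinely exclude \emph{every} embedding of the offending group, not merely the "obvious" one — this is why the arguments are phrased in terms of conjugacy-invariant data (number of elements of a given order, commuting relations among them) rather than specific permutations. Once that care is taken, the proof is essentially the assembly described above.
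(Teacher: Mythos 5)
Your proposal matches the paper's own argument essentially verbatim: the paper establishes the theorem through the discussion preceding it, namely the Lagrange-order elimination from the $29$ subgroup types of $S_6$, the counting arguments on order-$3$ elements (ruling out $S_4$, $A_4$, $A_4 \times \Z_2$), order-$4$ elements (ruling out $\Z_2 \times \Z_4$) and commuting involutions (ruling out $\Z_2 \times \Z_2 \times \Z_2$), together with the explicit generators in Table \ref{Ta:K33} for realizability. The only blemish is a harmless miscount: your eliminations remove $9+3+1+1 = 14$ isomorphism types, which matches the $29 - 15 = 14$ types absent from the asserted list, not $15$.
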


\subsection{Topological symmetry groups of $K_{3,3}$.}

Flapan and Lawrence determined which subgroups of $\Aut(K_{3,3})$ are positively realizable.

\begin{theorem}[\cite{fl}] \label{T:K33TSG+}
The non-trivial groups which occur as $\TSG_+(\Gamma)$ for some embedding $\Gamma$ of $K_{3,3}$ are:
$$D_3 \times D_3, (\Z_3\times \Z_3) \semi \Z_2, D_3 \times \Z_3, D_6, \Z_3 \times \Z_3, D_3, \Z_6, D_2, \Z_3, \Z_2$$
\end{theorem}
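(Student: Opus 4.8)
The plan is to split the classification into an upper bound — showing that every realizable $\TSG_+$ is isomorphic to a subgroup of a fixed copy of $D_3 \times D_3$ — and a lower bound, realizing each of the ten groups from a single symmetric embedding via the Subgroup Theorem.

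For the upper bound I would use the sign homomorphism $$\phi\colon \Aut(K_{3,3}) = (S_3 \times S_3)\semi\Z_2 \longrightarrow \{\pm 1\}, \qquad (\sigma_1,\sigma_2) \longmapsto \mathrm{sgn}(\sigma_1)\,\mathrm{sgn}(\sigma_2),$$ extended by sending the part-swap $(14)(25)(36)$ to $+1$. Checking $\phi$ against the conjugacy-class table, the five positively realizable classes together with the identity account for exactly $1 + 9 + 6 + 4 + 4 + 12 = 36 = |\ker\phi|$ elements, all lying in $\ker\phi$, while the three negatively realizable classes fill the other coset; hence $G_0 := \ker\phi$ is precisely the set of positively realizable automorphisms of $K_{3,3}$. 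Since every element of $\TSG_+(\Gamma)$ is induced by an orientation-preserving homeomorphism of $(S^3,\Gamma)$ and so is positively realizable, this gives $\TSG_+(\Gamma) \leq G_0$ for every embedding $\Gamma$. Diagonalizing the two commuting involutions $((12),(45))$ (which inverts $A_3 \times A_3 = \Z_3 \times \Z_3$) and $(14)(25)(36)$ (which swaps its two factors) over the diagonal and anti-diagonal $\Z_3$'s shows $G_0 \cong D_3 \times D_3$. Combined with Theorem~\ref{T:K33subgroups}, the only nontrivial isomorphism types that can occur are those of subgroups of $D_3 \times D_3$, which are exactly the ten in the statement: of the fourteen nontrivial types in Theorem~\ref{T:K33subgroups}, $(D_3 \times D_3)\semi\Z_2$ is excluded on order, and $(\Z_3\times\Z_3)\semi\Z_4$, $D_4$, $\Z_4$ are excluded because $D_3 \times D_3$ has no element of order $4$, while each of the remaining ten embeds in $D_3 \times D_3$.

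For the lower bound I would invoke the Subgroup Theorem (valid since $K_{3,3}$ is $3$-connected) and construct a single embedding $\Gamma_0$ with $\TSG_+(\Gamma_0) = G_0$ containing an edge not pointwise fixed by any nontrivial element; then every subgroup of $G_0 \cong D_3 \times D_3$ is realized, and $D_3\times D_3$ contains subgroups isomorphic to all ten listed groups. Concretely, realize $G_0$ as a group $\mathcal{G}$ of orientation-preserving isometries of $S^3 = \{(z_1,z_2) \in \mathbb{C}^2 : |z_1|^2 + |z_2|^2 = 1\}$ generated by the rotations $(z_1,z_2) \mapsto (\omega z_1, z_2)$ and $(z_1,z_2) \mapsto (z_1, \omega z_2)$, where $\omega = e^{2\pi i/3}$, together with the commuting order-two rotations $(z_1,z_2) \mapsto (\bar z_1, \bar z_2)$ and $(z_1,z_2) \mapsto (z_2, z_1)$; each nontrivial element of $\mathcal{G}$ fixes a circle, as Smith Theory demands. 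I would place the three vertices of one part of $K_{3,3}$ as a free orbit of the first rotation on the Hopf circle $\{z_2 = 0\}$ and the other part as a free orbit of the second rotation on $\{z_1 = 0\}$ (so that the last generator interchanges the two parts), and then join them by a $\mathcal{G}$-equivariant family of nine pairwise disjoint arcs chosen generically so that no arc lies in any of the finitely many fixed circles. Then $\mathcal{G}$ acts on the resulting $\Gamma_0$ and induces all of $G_0$, so $G_0 \leq \TSG_+(\Gamma_0)$; with the upper bound this forces equality, and the free-edge condition holds by construction.

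The hard part will be the equivariant construction in the lower bound: producing the nine joining arcs compatibly with the action of $\mathcal{G}$ and pairwise disjoint — essentially a symmetric version of the standard genus-one embedding of $K_{3,3}$ — and verifying the free-edge hypothesis of the Subgroup Theorem. The upper bound, by contrast, is essentially bookkeeping once the Nikkuni--Taniyama realizability data and Theorem~\ref{T:K33subgroups} are in hand.
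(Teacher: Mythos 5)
Your upper bound is fine: granting the Nikkuni--Taniyama realizability data quoted in Section \ref{SS:K33subgps}, the positively realizable classes together with the identity do form the kernel of the sign-product homomorphism, that kernel is indeed isomorphic to $D_3 \times D_3$ (your diagonal/anti-diagonal splitting is correct), and the ten listed groups are exactly the nontrivial subgroup types of $D_3\times D_3$ from Theorem \ref{T:K33subgroups}. Note, though, that the paper does not prove this theorem at all --- it is imported from Flapan--Lawrence \cite{fl}, who establish the realizations by separate constructions --- so the real question is whether your realization scheme works.

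It does not, and the gap is not in the equivariant construction of the nine arcs (that part can be done) but in the application of the Subgroup Theorem. Its hypothesis requires an edge $e$ not pointwise fixed by any nontrivial element of $\TSG_+(\Gamma_0)$, i.e.\ no nontrivial automorphism in the group fixes both endpoints of $e$ (this is how the hypothesis is used throughout the paper, e.g.\ for $K_{3,3,1}$ and $P_9$). But if $\TSG_+(\Gamma_0) = G_0 = \ker\phi \cong D_3\times D_3$, then for the edge $\{1,4\}$ the involution $(23)(56)$ lies in $G_0$ (its sign product is $+1$; it is conjugate to $(12)(45)$) and fixes both $1$ and $4$; since $\Z_3\times\Z_3 \leq G_0$ acts transitively on the nine edges, \emph{every} edge of $K_{3,3}$ is pointwise fixed by some nontrivial element of $G_0$. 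So the free-edge condition is an abstract group-theoretic obstruction that no generic choice of arcs can repair, and your claim that ``the free-edge condition holds by construction'' is false; the Subgroup Theorem can never be invoked from an embedding realizing the full $D_3\times D_3$ (the same problem occurs for $(\Z_3\times\Z_3)\semi\Z_2$, which also contains all nine such involutions). Consequently the other nine groups do not follow from your single embedding. A repaired argument has to realize the larger groups individually (as in \cite{fl}) and can only use the Subgroup Theorem from copies whose edge stabilizers are trivial --- for instance the $D_6 = \langle (12)(56), (142536)\rangle$ of Table \ref{Ta:K33} fixes no edge of $K_{3,3}$ pointwise (e.g.\ no nontrivial element fixes both $1$ and $4$), so a $D_6$-symmetric embedding yields $D_6, \Z_6, D_3, D_2, \Z_3, \Z_2$; but $D_3\times D_3$, $D_3\times\Z_3$, $(\Z_3\times\Z_3)\semi\Z_2$ and $\Z_3\times\Z_3$ still require direct constructions.
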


We will show that the remaining four non-trivial subgroups are realizable (although not positively realizable). 

\begin{theorem} \label{T:K33TSG}
The groups which are realizable but {\bf not} positively realizable for $K_{3,3}$ are: $(D_3 \times D_3) \semi \Z_2$, $(\Z_3 \times \Z_3) \semi \Z_4$, $D_4$ and $\Z_4$.
\end{theorem}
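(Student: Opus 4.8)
The assertion that none of these four groups is positively realizable is immediate from Theorem~\ref{T:K33TSG+}: by Theorem~\ref{T:K33subgroups} they are exactly the non-trivial subgroups of $\Aut(K_{3,3})$ that do \emph{not} appear in Flapan and Lawrence's list. So the content of the theorem is realizability, and the plan is, for each group $G$, to build an embedding $\Gamma$ carrying an action of a finite subgroup of homeomorphisms of $S^3$ isomorphic to $G$ that induces the corresponding subgroup of $\Aut(K_{3,3})$, and then to check that no larger subgroup of $\Aut(K_{3,3})$ is realized.

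I would begin with $G=(D_3\times D_3)\semi\Z_2=\Aut(K_{3,3})$, where this is cleanest. Working in $S^3=\{(z_1,z_2)\in\C^2:|z_1|^2+|z_2|^2=1\}$, place $\{1,2,3\}$ as an equilateral triangle on the core circle $\{z_2=0\}$ and $\{4,5,6\}$ as an equilateral triangle on the core circle $\{z_1=0\}$ (so the two triangles lie on a Hopf link), and join each pair by an arc of the form $\theta\mapsto(\cos\theta\,q_i,\sin\theta\,q_j)$; a short computation shows distinct such arcs meet only at shared endpoints. The group $S_3\wr S_2=(D_3\times D_3)\semi\Z_2$, realized in $O(4)$ by the dihedral symmetries of the two triangles together with the coordinate swap $(z_1,z_2)\mapsto(z_2,z_1)$, preserves this embedding and induces all of $\Aut(K_{3,3})$, so $\TSG(\Gamma)=\Aut(K_{3,3})=(D_3\times D_3)\semi\Z_2$. (The within-triangle reflections are orientation reversing---consistent with a transposition such as $(12)$ being only negatively realizable---so $\TSG_+(\Gamma)$ is an index-two copy of $D_3\times D_3$, which is harmless here.)

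For the other three groups the key new ingredient is an orientation-reversing homeomorphism of $S^3$ of order four realizing the automorphism $(1425)(36)$, which each of $\Z_4$, $D_4$, $(\Z_3\times\Z_3)\semi\Z_4$ contains. By the Finite Order Theorem we may take it finite order, and Smith Theory then forces its fixed-point set to be $S^0$ (an $S^2$ fixed set would make it an involution) with the fixed-point set of its square a circle; a concrete model is $(z_1,z_2)\mapsto(iz_1,\bar z_2)$. The automorphism $(1425)(36)$ has vertex orbits $\{1,4,2,5\}$ and $\{3,6\}$ and edge orbits of sizes $4$, $4$, and $1$, so one can place $K_{3,3}$ equivariantly---putting $\{3,6\}$ on the fixed circle of the square and running the one invariant edge through a fixed point---to get a $\Z_4$-equivariant embedding. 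For $D_4$ one additionally imposes the orientation-preserving involution inducing $(14)(25)(36)$ (positively realizable, and conjugating the order-four map to its inverse), and for $(\Z_3\times\Z_3)\semi\Z_4$ one additionally imposes order-three rotations inducing $(123)$ and $(456)$. In each case $\TSG(\Gamma)\supseteq G$; to get equality one uses Theorem~\ref{T:K33subgroups} to list the few subgroups of $\Aut(K_{3,3})$ properly containing $G$ and rules them out, if necessary by knotting a representative edge of an orbit so as to kill the extra symmetry while respecting $G$. (Alternatively, $(D_3\times D_3)\semi\Z_2$ and $(\Z_3\times\Z_3)\semi\Z_4$ can be obtained by restricting embeddings of $K_6$ that realize these groups, since the relevant copies of the groups in $S_6$ stabilize the partition $\{\{1,2,3\},\{4,5,6\}\}$ and hence carry the embedded $K_{3,3}$ to itself.)

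The main obstacle is this last step of pinning $\TSG(\Gamma)$ down exactly: for the maximal group there is nothing to check, but for $\Z_4$, $D_4$, and $(\Z_3\times\Z_3)\semi\Z_4$ one must verify that the symmetry-breaking modifications genuinely eliminate every automorphism outside $G$ while staying compatible with the partly orientation-reversing action of $G$---the bookkeeping of edge orbits and the choice of knots to insert is where the care is needed. A secondary point is confirming that the order-four orientation-reversing homeomorphism can be realized together with the other generators by a single embedding with no forced edge crossings.
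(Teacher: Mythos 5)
Your reduction to realizability is fine (non-positive-realizability follows from Theorem~\ref{T:K33TSG+} together with Theorem~\ref{T:K33subgroups}), and your construction for $(D_3\times D_3)\semi\Z_2$ is essentially the paper's: the two vertex classes on the cores of complementary solid tori (a Hopf link), with the dihedral symmetries of each triangle and the swap of the two tori inducing all of $\Aut(K_{3,3})$; since this is the full automorphism group, no further argument is needed there.

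For the remaining three groups, however, your proposal has a genuine gap, and you flag it yourself: you only construct embeddings with $\TSG(\Gamma)\supseteq G$ (a $\Z_4$-equivariant embedding for the model map $(z_1,z_2)\mapsto(iz_1,\bar z_2)$, then ``additionally imposing'' the extra symmetries for $D_4$ and $(\Z_3\times\Z_3)\semi\Z_4$), and you defer the upper bound $\TSG(\Gamma)\subseteq G$ to an unspecified step of ``knotting a representative edge of an orbit so as to kill the extra symmetry while respecting $G$.'' That step is the actual content of the theorem, and it requires specific choices that your sketch does not supply. The paper's mechanism is to exploit the symmetry types of particular knots: inserting the \emph{negative amphicheiral, noninvertible} knot $8_{17}$ in every edge of the symmetric embedding kills the reflection $(12)$ (which would carry a knotted edge to its mirror image) and the torus swap $(14)(25)(36)$ (which inverts edges), while still permitting the composite $(1425)(36)$, which sends each knot to the inverse of its mirror image; combined with the index-two position of $(\Z_3\times\Z_3)\semi\Z_4$ in $\Aut(K_{3,3})$ this pins the group down exactly. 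For $D_4$ one instead inserts the \emph{fully amphicheiral} figure-eight knot only in the invariant edge $\{3,6\}$, which preserves $(12)$ and $(14)(25)(36)$ but destroys the $3$-cycles, and then uses that $D_4$ is the only proper subgroup of $\Aut(K_{3,3})$ containing $D_4$; for $\Z_4$ one combines the two insertions (figure-eight on $\{3,6\}$, $8_{17}$ on $\{1,4\},\{1,5\},\{2,4\},\{2,5\}$). Without some such explicit symmetry-breaking argument (or an equivalent one), your embeddings could a priori have strictly larger topological symmetry groups, so the proof is incomplete for $\Z_4$, $D_4$, and $(\Z_3\times\Z_3)\semi\Z_4$. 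The same issue affects your parenthetical alternative of restricting a $K_6$ embedding realizing $(\Z_3\times\Z_3)\semi\Z_4$: deleting the edges inside the two vertex classes can only enlarge the symmetry group of the resulting $K_{3,3}$ embedding, so you would still need to rule out that $(12)$ becomes realizable after the deletion.
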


\begin{proof}

\begin{figure} [htbp]
$$\scalebox{.8}{\includegraphics{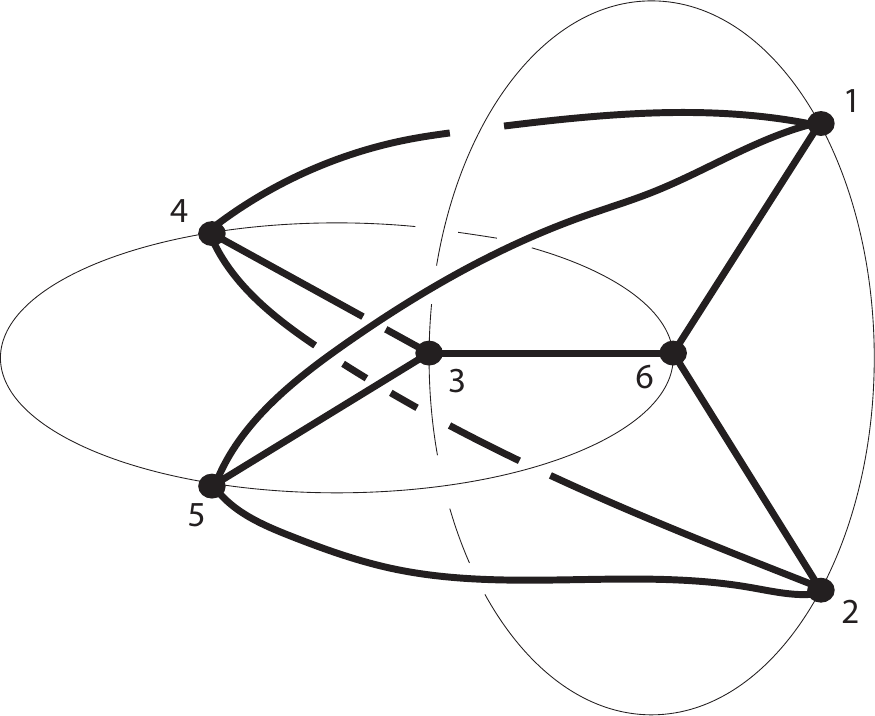}}$$
\caption{An embedding $\Gamma$ of the graph $K_{3,3}$ with $\TSG(\Gamma) = (D_3 \times D_3) \semi \Z_2$. The geodesic circles are shown for reference, but are not part of the graph.}
\label{F:K33embedding1}
\end{figure}

Figure \ref{F:K33embedding1} illustrates an embedding of $K_{3,3}$ with topological symmetry group $(D_3 \times D_3) \semi \Z_2$. The vertices $\{1, 2, 3\}$ are arranged symmetrically around a geodesic circle, while the vertices $\{4, 5, 6\}$ are arranged around a complementary geodesic circle; in other words, the two circles are the cores of complementary tori whose union is $S^3$. The permutation $(12)$ is realized by the reflection in the sphere containing $\{4, 5, 6\}$ and perpendicular to the circle through $\{1, 2, 3\}$; similarly, $(45)$ is realized by reflection in the sphere containing $\{1,2, 3\}$ and perpendicular to the circle through $\{4, 5, 6\}$ (in Figure \ref{F:K33embedding1}, these two spheres can be pictured as the planes containing the two geodesic circles).  The permutations $(123)$ and $(456)$ are realized by the rotations of order 3 around each of the geodesic circles.  Finally, the permutation $(14)(25)(36)$ is realized by the homeomorphism of $S^3$ that interchanges the two complementary tori. Together, these permutations generate $\Aut(K_{3,3}) = (D_3 \times D_3) \semi \Z_2$.

\begin{figure} [htbp]
$$\scalebox{1.6}{\includegraphics{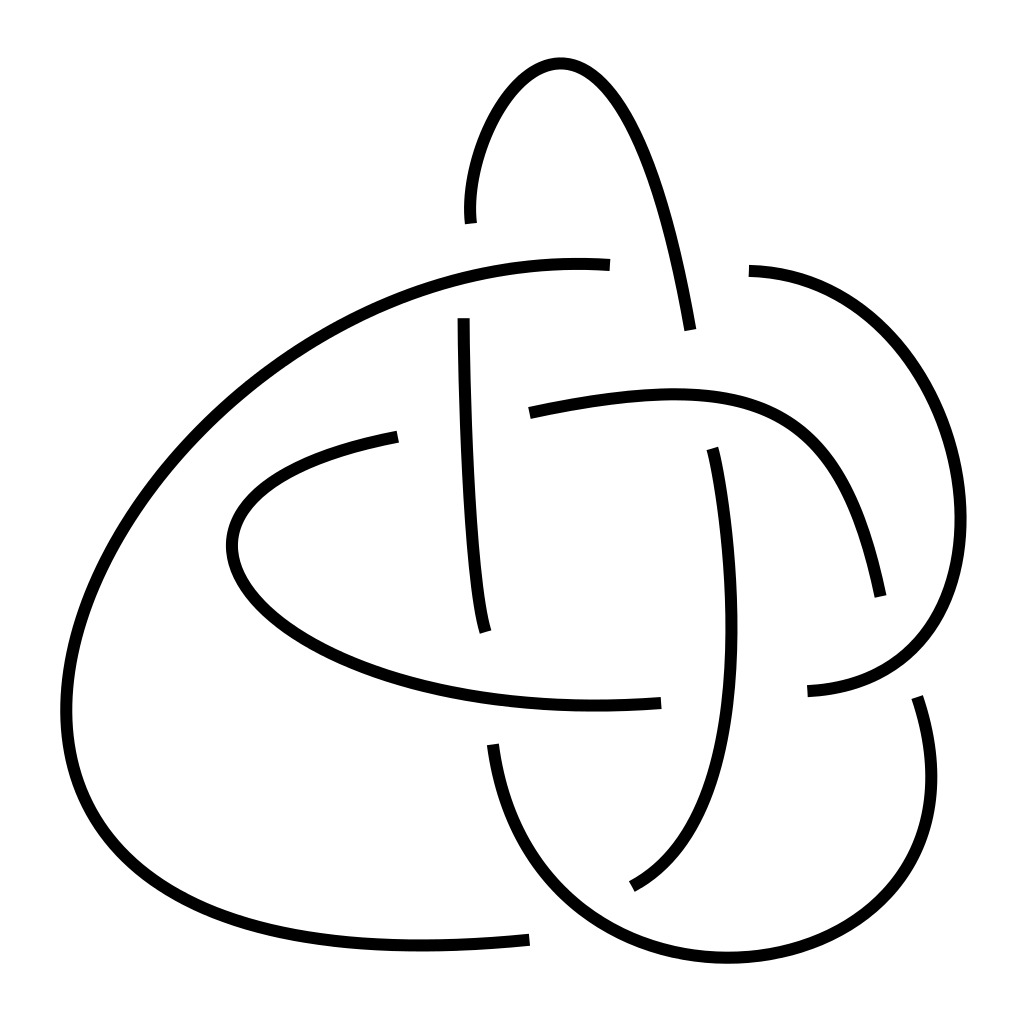}} \qquad \scalebox{.4}{\includegraphics{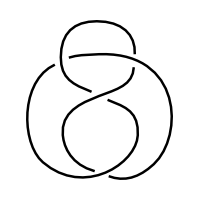}}$$
\caption{The negative amphicheiral knot $8_{17}$  and the fully amphicheiral figure 8 knot. }
\label{F:knots}
\end{figure}

To obtain an embedding with topological symmetry group $(\Z_3 \times \Z_3) \semi \Z_4$, we add the knot $8_{17}$ (shown in Figure \ref{F:knots}) to each edge of the embedding in Figure \ref{F:K33embedding1}.  This is the knot with fewest crossings which is {\em negative amphicheiral}, meaning that it is not equivalent to its inverse (the result of changing orientation) or to its mirror image, but it {\em is} equivalent to the inverse of its mirror image. The permutation $(12)$ must be realized by an orientation-reversing homeomorphism which fixes the edge $\{3,6\}$; this would require an isotopy between $8_{17}$ and its mirror image, which is impossible.  So this embedding does not allow the transposition $(12)$.  However, it does allow the rotations $(123)$ and $(456)$ as before.  Moreover, it allows the permutation $(1425)(36) = (12)(14)(25)(36)$, generated by first interchanging the complementary tori, and then reflecting in a sphere. This combination first inverts the edges (and their respective knots), and then takes the mirror images.  Since $8_{17}$ is equivalent to the inverse of its mirror image, the result is isotopic to the original embedding.  So the topological symmetry group contains $(\Z_3 \times \Z_3) \semi \Z_4$ (generated by the 3-cycles and the 4-cycle); since this group has index 2 in $\Aut(K_{3,3})$, and the topological symmetry group does not contain $(12)$ (and so is not equal to $\Aut(K_{3,3})$), the topological symmetry group must equal $(\Z_3 \times \Z_3) \semi \Z_4$.

To realize $D_4$ as the topological symmetry group, we begin with the embedding in Figure \ref{F:K33embedding1} and add the fully amphicheiral figure 8 knot (see Figure \ref{F:knots}) to the edge $\{3,6\}$. The figure 8 knot is isotopic to both its inverse and its mirror image, so this embedding is preserved by the reflection $(12)$ (which sends edge $\{3,6\}$. to its mirror image), and by the homeomorphism realizing $(14)(25)(36)$ (which sends edge $\{3,6\}$. to its inverse). These two permutations generate a subgroup of $\Aut(K_{3,3})$ isomorphic to $D_4$ (note that the 4-cycle $(1425)(36)$ is the product $(12)(14)(25)(36)$); however, since this embedding does not allow the permutation $(123)$ (since only edge $\{3,6\}$ has a knot tied in it), its topological symmetry group is not the full automorphism group.  But the only proper subgroup of $\Aut(K_{3,3})$ that contains $D_4$ is $D_4$ itself, so this embedding has a topological symmetry group isomorphic to $D_4$.

Finally, to realize $\Z_4$, we add the figure 8 knot to the edge $\{3,6\}$., as in the last paragraph, and then add the knot $8_{17}$ to the edges $\{1,4\}$, $\{1,5\}$, $\{2,4\}$ and $\{2,5\}$.  As before, the knot $8_{17}$ does not allow the permutations $(12)$ (since it is chiral) or $(14)(25)(36)$ (since it is noninvertible), but it does allow the product $(1425)(36)$ (since it is isotopic to the inverse of its mirror image). This reduces the topological symmetry group from $D_4$ to $\Z_4$, as desired.
\end{proof}

\section{The graph $K_{3,3,1}$} 
\label{S:K331}

 The Petersen graph $K_{3,3,1}$ is the complete tripartite graph with two sets of three vertices labeled $\{1,2,3\}$, $\{4,5,6\}$ and one singleton vertex labeled $\{7\}$, as shown in Figure \ref{F:K331}.    
 
 \begin{figure} [h]
$$\scalebox{.7}{\includegraphics{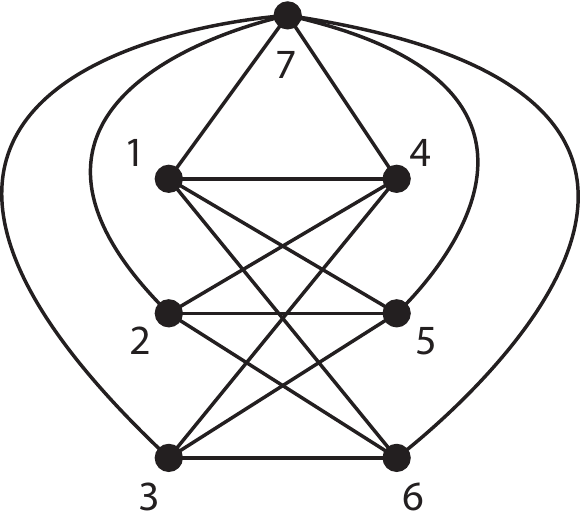}}$$
\caption{The graph $K_{3,3,1}$.}
\label{F:K331}
\end{figure}
 

In order to determine which groups are realizable for the graph $K_{3,3,1}$, we first observe that $K_{3,3,1}$ contains the graph $K_{3,3}$ as a subgraph (induced by the set of vertices $\{1,2,3,4,5,6\}$) and every automorphism of $K_{3,3,1}$ fixes the subgraph $K_{3,3}$ setwise.  Further, since the remaining vertex 7 is fixed by every automorphism of $K_{3,3,1}$, we conclude that $\Aut(K_{3,3,1}) = \Aut(K_{3,3})$. 
 
If $\sigma$ is a realizable automorphism of $K_{3,3,1}$ then there exists a homeomorphism $h: S^3\to S^3$ taking some embedding $\Gamma$ of $K_{3,3,1}$ to itself which realizes $\sigma$.  Let $\Gamma'$ denote the embedding of $K_{3,3}$ induced by $\Gamma$.  Then $h$ also takes $\Gamma'$ to itself and realizes the automorphism $\sigma \vert_{K_{3,3}}$.  Since Nikkuni and Taniyama's list \cite{nt} gives all realizable automorphisms of $K_{3,3}$ (see Section \ref{SS:K33subgps}), every realizable automorphism of $K_{3,3,1}$ is an extension of one of these.  However, not all automorphisms which are realizable for $K_{3,3}$ are realizable for $K_{3,3,1}$.


\begin{lemma}
\label{no3cycle}
No automorphism conjugate to $(123)$ is realizable for any embedding of $K_{3,3,1}$. 
\end{lemma}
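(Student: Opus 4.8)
The plan is to use the Finite Order Theorem to replace any homeomorphism realizing such an automorphism by one of finite order, and then to force its fixed-point set to be too big for Smith Theory. Since $\Aut(K_{3,3,1}) = \Aut(K_{3,3})$, the automorphisms conjugate to $(123)$ are exactly the four $3$-cycles on $\{1,2,3\}$ and on $\{4,5,6\}$; because these two triples are interchanged by the automorphism $(14)(25)(36)$ (and $(132) = (123)^{-1}$), realizability for one forces realizability for all, so it suffices to show that $(123)$ itself is not realizable. Suppose it were. As $K_{3,3,1}$ is $3$-connected, the Finite Order Theorem provides an embedding $\Gamma$ and a finite-order homeomorphism $h$ of $(S^3,\Gamma)$ inducing $(123)$; write $f$ for this embedding.

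First I would locate a tripod inside $\fix(h)$. Since $(123)$ fixes the vertices $4,5,6,7$, the homeomorphism $h$ fixes the four points $f(4),f(5),f(6),f(7)$. In $K_{3,3,1}$ the vertex $7$ is joined to each of $4,5,6$, so $h$ carries each of the three edges $f_{4,7}, f_{5,7}, f_{6,7}$ to itself while fixing both of its endpoints; a finite-order self-homeomorphism of an arc fixing its endpoints is the identity, so $h$ is the identity on each of these three arcs. Their union is a ``Y'' (a copy of $K_{1,3}$) centered at $f(7)$, and this Y lies in $\fix(h)$. Hence $\fix(h)$ contains a point of valence three, so it cannot be $\emptyset$, $S^0$, or $S^1$ (a compact connected subset of $S^1$ is a point, an arc, or $S^1$, none of which contains a Y). By Smith Theory, this already shows $h$ cannot be orientation preserving.

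It remains to rule out an orientation-reversing $h$, in which case Smith Theory forces $\fix(h) \cong S^2$. I would finish as follows: this $S^2$ separates $S^3$ into two components $B_1, B_2$, and since $h$ reverses orientation while fixing the sphere pointwise, it interchanges $B_1$ and $B_2$ (a standard fact). Now $h$ sends $f(1) \mapsto f(2) \mapsto f(3) \mapsto f(1)$; none of these three points is fixed by $h$, so each lies in $B_1$ or $B_2$, and following the $3$-cycle then puts $f(1)$ in $B_1 \cap B_2 = \emptyset$, a contradiction. (Alternatively one can bypass this case: every automorphism of $K_{3,3,1}$ restricts to one of $K_{3,3}$, and $(123)$ is \emph{not} negatively realizable for $K_{3,3}$ by Nikkuni--Taniyama's classification, so $h$ cannot reverse orientation.)

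The core of the argument is the tripod in the fixed set; the only delicate point is the orientation-reversing case, since the tripod obstruction alone excludes only orientation-preserving symmetries, and one needs either the side-swapping argument or the known $K_{3,3}$ data to close it off. Everything else is routine.
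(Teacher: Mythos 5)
Your proof is correct, and its core is the same as the paper's: apply the Finite Order Theorem, observe that the three edges from vertex $7$ to $4,5,6$ are pointwise fixed (your explicit remark that a finite-order homeomorphism of an arc fixing its endpoints is the identity is a detail the paper glosses over), and then invoke Smith Theory against the resulting ``Y'' in $\fix(h)$. Where you diverge is the orientation-reversing case. The paper never confronts it: it quotes Nikkuni--Taniyama's classification for $K_{3,3}$ at the outset to conclude that any homeomorphism inducing $(123)$ on the $K_{3,3}$ subgraph is orientation preserving, and only then runs the Smith argument. You instead close that case self-containedly, noting that $\fix(h)\cong S^2$ would force $h$ to swap the two complementary domains, which is incompatible with the odd orbit $f(1)\mapsto f(2)\mapsto f(3)\mapsto f(1)$ of points off the fixed sphere; this buys independence from the Nikkuni--Taniyama realizability data (which you correctly offer as the alternative route, i.e.\ the paper's route), at the cost of invoking the side-swapping fact for a possibly wild fixed sphere. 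A small simplification worth noting: once $h$ has finite order, the orientation-reversing case follows from the same tripod obstruction with no separation argument at all --- $h^2$ is orientation preserving, of finite order, nontrivial (it induces $(132)$), and $\fix(h^2)\supseteq\fix(h)$ contains the Y, contradicting Smith Theory directly.
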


    \begin{proof}
First, fix some embedding $\Gamma'$ of $K_{3,3,1}$.  Proceeding by contradiction, assume that $(123)$ is realizable for $\Gamma'$. Then $(123)$ is induced by a homeomorphism $h'$ of $(S^3,\Gamma')$.  Since $h'$ also induces the automorphism $(123)$ on the subgraph $K_{3,3}$, it must be orientation-preserving \cite{nt}. Furthermore, vertices $4$,$5$,$6$, and $7$ are fixed, and so are the edges between them. 

By the Finite Order Theorem, there exists a re-embedding of $\Gamma'$, say $\Gamma$, such that $(123)$ is induced by a finite order homeomorphism $h$ of $(S^3, \Gamma)$. Moreover, since $h$ is a finite order homeomorphism and is orientation preserving, by Smith Theory, $\fix(h)=\emptyset$ or is homeomorphic to $S^1$. However, the subgraph composed of $\{4,5,6,7\}$ and edges $\{4,7\}$,$\{5,7\}$, and $\{6,7\}$, which has a $Y$ shape, is contained in $\fix(h)$, but does not embed in a circle.  This contradicts Smith Theory, therefore $(123)$ is not positively realizable. This implies that no automorphism conjugate to $(123)$ is realizable either.
    \end{proof}
    

\begin{lemma}
\label{no6cycle}
    No order 6 automorphism of $K_{3,3,1}$ is realizable.
\end{lemma}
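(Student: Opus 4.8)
The plan is to reduce this to Lemma \ref{no3cycle} together with the earlier analysis of which automorphisms of $K_{3,3}$ are realizable. First I would identify the order $6$ automorphisms of $\Aut(K_{3,3,1}) = \Aut(K_{3,3})$. From the conjugacy class table, the elements of order $6$ fall into the classes represented by $(142536)$ and by $(12)(456)$; both of these, restricted to the $K_{3,3}$ subgraph on $\{1,\dots,6\}$, are order $6$ automorphisms of $K_{3,3}$ (the vertex $7$ being fixed). So it suffices to rule out each of these two conjugacy classes.

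For an automorphism $\sigma$ conjugate to $(12)(456)$: by Nikkuni–Taniyama's list, $(12)(456)$ is only \emph{negatively} realizable for $K_{3,3}$, so any homeomorphism inducing it is orientation reversing. Apply the Finite Order Theorem to re-embed so that $\sigma$ is induced by a finite order orientation-reversing homeomorphism $h$; then by Smith Theory $\fix(h)$ is $S^0$ or $S^2$. But $\sigma$ fixes vertex $7$ and also fixes vertex $3$ (since $(12)(456)$ fixes $3$), hence it fixes the edge $\{3,7\}$ setwise. An orientation-reversing finite order homeomorphism whose fixed set is two points cannot fix an edge setwise unless the edge's endpoints are exactly those two points and the edge is flipped — but $h$ fixes $7$, and $h^3$ (which has order $2$ and is orientation preserving) would then have $1$-dimensional fixed set containing the relevant $Y$ on $\{4,5,6,7\}$, contradicting Smith Theory for orientation-preserving maps; alternatively, a cleaner route is to note $\sigma^2 = (456)$ is conjugate to $(123)$ in $\Aut(K_{3,3,1})$ — wait, $(456)$ and $(123)$ are conjugate via the swap $(14)(25)(36)$ — so $\sigma^2$ is a realizable (since $h^2$ realizes it) automorphism conjugate to $(123)$, directly contradicting Lemma \ref{no3cycle}. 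This last observation is the simplest: if $\sigma$ has order $6$ and is realized by $h$, then $h^2$ realizes $\sigma^2$, an order $3$ automorphism.

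So the real content is: every order $6$ automorphism of $K_{3,3,1}$ has its cube or square landing in a "forbidden" conjugacy class. For $\sigma$ conjugate to $(142536)$: here $\sigma^3 = (14)(25)(36)$ is order $2$, but $\sigma^2 = (153)(264)$ is an order $3$ automorphism fixing $7$; is it conjugate to $(123)$? The element $(153)(264)$ in $(S_3\times S_3)\semi \Z_2$ lies in $\Z_3\times\Z_3$ and is a "diagonal" order $3$ element, conjugate to $(123)(456)$, not to $(123)$. So for this class I cannot invoke Lemma \ref{no3cycle} for $\sigma^2$. Instead I would argue directly with Smith Theory: $\sigma$ fixes only vertex $7$ among the seven vertices, but it permutes $\{1,3,5\}$ and $\{2,4,6\}$ (reading off the cycle), and the homeomorphism $h$ realizing it (after applying the Finite Order Theorem, taking $h$ orientation preserving since $(142536)$ is positively realizable for $K_{3,3}$) has $\fix(h)$ equal to $\emptyset$ or $S^1$; since $7$ is fixed it must be $S^1$, and the three edges $\{4,7\},\{5,7\},\{6,7\}$ are permuted cyclically by $h$ while $7 \in \fix(h)$, which is fine so far — the obstruction must come from $h^3$, which has order $2$, is orientation preserving, realizes $(14)(25)(36)$, and fixes the $Y$ on $\{4,5,6,7\}$ pointwise, again contradicting Smith Theory. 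I expect the main obstacle is precisely this bookkeeping: making sure that for \emph{each} order $6$ conjugacy class, an appropriate power of $h$ either contradicts Lemma \ref{no3cycle} directly or fixes a $Y$-subgraph pointwise while being an orientation-preserving finite order map, and in the latter case checking carefully that the relevant power really does fix a $Y$ (a vertex together with all three incident edges) rather than merely a path.
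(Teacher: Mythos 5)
Your reduction of the class of $(12)(456)$ is fine (and in fact addresses a case the paper's own proof passes over quickly): if $h$ realizes such a $\sigma$, then $h^2$ realizes $\sigma^2$, a $3$-cycle on $\{4,5,6\}$, which is conjugate to $(123)$ in $\Aut(K_{3,3,1})$ via $(14)(25)(36)$, contradicting Lemma \ref{no3cycle}. The genuine gap is in the class of $(142536)$, which is the case the paper actually has to work for, and your proposed contradiction there does not exist. First, two computational slips: $(142536)^2 = (123)(456)$ (not $(153)(264)$, which is not even an automorphism of $K_{3,3}$), and $(142536)^3 = (15)(34)(26)$ (not $(14)(25)(36)$). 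More importantly, \emph{no} power of $h$ fixes a $Y$-subgraph pointwise: $h^2$ induces $(123)(456)$ and $h^3$ induces $(15)(34)(26)$, each of which fixes only vertex $7$ among the vertices. So your claimed contradiction ``$h^3$ fixes the $Y$ on $\{4,5,6,7\}$ pointwise, violating Smith Theory'' is false; indeed $h^3$ is an orientation-preserving involution whose fixed set being a circle through vertex $7$ is perfectly consistent with Smith Theory, and $(123)(456)$ is actually positively realizable for $K_{3,3,1}$ (it is realized in the $D_3$ embedding of Theorem \ref{K331tsg}), so no power of $h$ lands in a forbidden class.

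The missing idea is the paper's: after applying the Finite Order Theorem, $h$ is orientation preserving of finite order with $7 \in \fix(h)$, so $\fix(h) \cong S^1$; since $\fix(h) \subseteq \fix(h^3)$ and $\fix(h^3)$ is also empty or a circle, $\fix(h^3) = \fix(h)$. Now $h^3$ inverts each of the edges $\{1,5\}$, $\{3,4\}$, $\{2,6\}$ (it swaps their endpoints), so it fixes a point on each of these edges, and those points therefore lie in $\fix(h)$. But $h$ maps the edge $\{1,5\}$ to the edge $\{3,4\}$, so the $h$-fixed point on $\{1,5\}$ must also lie on $\{3,4\}$, forcing two disjoint edges of the embedding to intersect --- a contradiction. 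Without this (or some substitute argument), the conjugacy class of $(142536)$ is not ruled out, so the lemma is not proved by your proposal as written.
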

    \begin{proof}
        Assume there exists an automorphism in $\Aut(K_{3,3,1})$ of order $6$ which is induced by a homeomorphism $h':S^3\rightarrow S^3$ for some embedding $\Gamma'$ of $K_{3,3,1}$. Note that up to conjugation the only order $6$ automorphism of $K_{3,3,1}$ is $(142536)$. Since $h$ also realizes the automorphism $(142536)$ on the subgraph $K_{3,3}$, it must be orientation preserving \cite{nt}. Furthermore, by the Finite Order Theorem, there exists another embedding $\Gamma$ such that $(142536)$ is induced by a finite order orientation preserving homeomorphism $h:S^3\rightarrow S^3$.  Observe, Smith Theory implies that $\fix{(h)}=\emptyset$ or is homeomorphic to $S^1$. But since vertex $7 \in \fix{(h)}$, it follows that $\fix{(h)}$ is $S^1$. 
        
        Next, consider $h^3$, which induces the automorphism $(15)(43)(26)$. Notice, this automorphism fixes the midpoints of the edges $\{1,5\}$, $\{4,3\}$ and $\{2,6\}$. Since $\fix(h) \subseteq \fix{(h^3)} \subseteq S^1 = \fix(h)$, it follows that $\fix(h^3) = \fix(h) = S^1$.  Since $h^3$ fixes the midpoints of the edges $\{1,5\}$, $\{4,3\}$ and $\{2,6\}$, so must $h$. Let us consider then the orbit of the edge $\{1,5\}$ under $h$: $\{\{3,4\},\{2,6\},\{1,5\}\}$. Notice, $h$ must fix the midpoint of edge $\{1,5\}$ under the action of the rotation inducing $(142536)$. But $h$ takes edge $\{1,5\}$ to edge $\{3,4\}$, fixing the midpoint of both, which implies these edges intersect in the embedding $\Gamma$.  This is a contradiction, so no order 6 automorphism is realizable for any embedding of $K_{3,3,1}$. 
    \end{proof}


From the Subgraph Lemma (using the subgraph $K_{3,3}$) and Theorem \ref{T:K33TSG+} the possible topological symmetry groups for embeddings of $K_{3,3,1}$ are the following: $D_6$, $D_4$, $D_3$, $D_2$, $\mathbb{Z}_6$, $\Z_4$, $\mathbb{Z}_3$,  $\mathbb{Z}_2$, $D_3\times D_3$, $D_3 \times \mathbb{Z}_3$, $\mathbb{Z}_3 \times \mathbb{Z}_3$, $(D_3 \times D_3) \semi \Z_2$, $(\Z_3 \times \Z_3) \semi \Z_4$, $(\mathbb{Z}_3 \times \mathbb{Z}_3) \rtimes \mathbb{Z}_2$. 

 \begin{lemma}
 \label{Z3Z3}
   The group $\mathbb{Z}_3\times \mathbb{Z}_3$ is not positively realizable for any embedding of $K_{3,3,1}$.
\end{lemma}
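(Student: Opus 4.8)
The plan is to argue by contradiction using Smith Theory, in the same spirit as Lemmas \ref{no3cycle} and \ref{no6cycle}. Suppose $\Gamma$ is an embedding of $K_{3,3,1}$ with $\TSG_+(\Gamma) \supseteq \mathbb{Z}_3 \times \mathbb{Z}_3$. The group $\mathbb{Z}_3 \times \mathbb{Z}_3$ is generated (in $\Aut(K_{3,3,1}) = \Aut(K_{3,3})$) by the two commuting $3$-cycles $(123)$ and $(456)$. In particular, the automorphism $(123)$ lies in $\TSG_+(\Gamma)$, i.e. $(123)$ is positively realizable for some embedding of $K_{3,3,1}$. But this directly contradicts Lemma \ref{no3cycle}, which says no automorphism conjugate to $(123)$ is realizable for any embedding of $K_{3,3,1}$.

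So the heart of the matter is really just this observation: any copy of $\mathbb{Z}_3 \times \mathbb{Z}_3$ inside $\Aut(K_{3,3,1})$ must contain an element conjugate to $(123)$. To see this, note that the elements of order $3$ in $\Aut(K_{3,3}) = (D_3 \times D_3) \semi \mathbb{Z}_2$ fall into two conjugacy classes, represented by $(123)$ and $(123)(456)$ (from the table of conjugacy classes in Section \ref{SS:K33subgps}). A subgroup isomorphic to $\mathbb{Z}_3 \times \mathbb{Z}_3$ has $8$ elements of order $3$; since the unique Sylow $3$-subgroup of $\Aut(K_{3,3})$ is exactly $\langle (123), (456)\rangle \cong \mathbb{Z}_3 \times \mathbb{Z}_3$ (recall $\Aut(K_{3,3})$ has order $72 = 8 \cdot 9$ and, as noted in Section \ref{SS:K33subgps}, its $8$ elements of order $3$ all commute), any subgroup isomorphic to $\mathbb{Z}_3 \times \mathbb{Z}_3$ equals this Sylow subgroup. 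Its non-identity elements are $(123)$, $(132)$, $(456)$, $(465)$, $(123)(456)$, $(123)(465)$, $(132)(456)$, $(132)(465)$, and among these $(123)$ (hence something conjugate to it) certainly appears.

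I do not expect any real obstacle here — the argument is short once the Sylow observation is in place. The only point requiring a little care is making explicit that the Sylow $3$-subgroup of $\Aut(K_{3,3,1})$ is normal (so that "contains a $\mathbb{Z}_3\times\mathbb{Z}_3$" forces "equals the Sylow $3$-subgroup"), which follows immediately from the fact recorded earlier that all $8$ order-$3$ elements of $\Aut(K_{3,3})$ commute with one another. Alternatively, and even more cheaply, one can bypass Sylow theory entirely: if $\TSG_+(\Gamma) \cong \mathbb{Z}_3 \times \mathbb{Z}_3$ then it is abelian of order $9$, so it cannot contain $(123)(456)$ together with, say, $(123)$ and still be a group of order $9$ unless it contains all of $\langle(123),(456)\rangle$ — and in any case a group of order $9$ all of whose nontrivial elements have order $3$ and which sits inside $\langle(123),(456)\rangle$ must be that whole group. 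Either way, $(123)$ is forced to be positively realizable, contradicting Lemma \ref{no3cycle}, and the lemma follows.
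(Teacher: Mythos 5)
Your proof is correct and follows essentially the same strategy as the paper: both reduce the lemma to the fact that any copy of $\Z_3\times\Z_3$ inside $\Aut(K_{3,3,1})$ must contain an element conjugate to $(123)$, and then invoke Lemma \ref{no3cycle} for the contradiction. The only (harmless) difference is how that group-theoretic fact is verified: you identify any such subgroup with the unique Sylow $3$-subgroup $\langle(123),(456)\rangle$, while the paper checks directly that two order-$3$ elements conjugate to $(123)(456)$ either generate only $\Z_3$ or have a product conjugate to $(123)$; your version also makes explicit the stronger statement (no realizable group contains $\Z_3\times\Z_3$) that the paper records immediately after the lemma.
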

    \begin{proof}
        The group $\mathbb{Z}_3 \times \mathbb{Z}_3$ must be generated from an ordered pair of order $3$ automorphisms $\alpha$ and $\beta$ so that $\langle \alpha,\beta \rangle = \mathbb{Z}_3 \times \mathbb{Z}_3$. Since automorphisms conjugate to $(123)$ are not realizable by Lemma \ref{no3cycle}, the only possibilities for $\alpha$ and $\beta$ are $(123)(456)$ and its conjugates $(123)(465)$, $(132)(456)$, and $(132)(465)$. Each pair of these automorphisms either are inverses and together generate $\mathbb{Z}_3$, or the product of $\alpha$ and $\beta$ gives an automorphism conjugate to $(123)$ which is not realizable by Lemma \ref{no3cycle}. Thus $\mathbb{Z}_3 \times \mathbb{Z}_3$ is not realizable for any embedding of $K_{3,3,1}$. 
    \end{proof}
    
In fact, this argument allows us to conclude that any group that contains $\Z_3 \times \Z_3$ as a subgroup is also not realizable. Hence $D_3\times D_3$, $D_3 \times \mathbb{Z}_3$, $(D_3 \times D_3) \semi \Z_2$, $(\Z_3 \times \Z_3) \semi \Z_4$, and $(\mathbb{Z}_3 \times \mathbb{Z}_3) \rtimes \mathbb{Z}_2$ are also not positively realizable for any embedding of $K_{3,3,1}$. 

From Lemma \ref{no6cycle}, a topological symmetry group for $K_{3,3,1}$ cannot contain a 6-cycle, so $D_6$ and $\Z_6$ are also not realizable for $K_{3,3,1}$.  This reduces our list of possibilities for realizable topological symmetry groups of $K_{3,3,1}$ to $D_4$, $D_3$, $D_2$, $\Z_4$, $\mathbb{Z}_3$, and $\mathbb{Z}_2$. 
   
    \begin{theorem}
\label{K331tsg}
            A nontrivial group is positively realizable for  $K_{3,3,1}$ if and only if it is one of: $D_3$, $D_2$, $\mathbb{Z}_3$, and $\mathbb{Z}_2$.
\end{theorem}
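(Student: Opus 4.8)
The plan is to prove both implications. For the forward direction, the discussion preceding the theorem already shows that a positively realizable group for $K_{3,3,1}$ must be one of $D_4$, $D_3$, $D_2$, $\Z_4$, $\Z_3$, $\Z_2$, so it suffices to rule out $D_4$ and $\Z_4$. Each of these contains an element of order $4$, and up to conjugacy the only order-$4$ element of $\Aut(K_{3,3,1}) = \Aut(K_{3,3})$ is $(1425)(36)$. By the Subgraph Lemma applied to the subgraph $K_{3,3}$, an orientation-preserving homeomorphism of $(S^3,\Gamma)$ realizing this automorphism would restrict to an orientation-preserving homeomorphism realizing $(1425)(36)$ on the induced embedding of $K_{3,3}$; but $(1425)(36)$ is only negatively realizable for $K_{3,3}$ by the classification of Nikkuni and Taniyama \cite{nt}. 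Hence $\TSG_+(\Gamma)$ has no element of order $4$ for any embedding $\Gamma$ of $K_{3,3,1}$, which excludes $D_4$ and $\Z_4$ and finishes this direction.

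For the reverse direction I would exhibit explicit symmetric embeddings. First I would construct an embedding $\Gamma_1$ realizing $D_3$: place vertex $7$ on an unknotted circle $C$ in $S^3$, arrange the three triangles $\{1,7,4\}$, $\{2,7,5\}$, $\{3,7,6\}$ symmetrically about $C$, and route the remaining six $K_{3,3}$-edges (which form a single $6$-cycle) compatibly, so that the order-$3$ rotation about $C$ induces $(123)(456)$ while an additional orientation-preserving order-$2$ rotation induces $(23)(56)$. Since $(23)(56)\,(123)(456)\,(23)(56) = [(123)(456)]^{-1}$, these homeomorphisms generate a subgroup of $\TSG_+(\Gamma_1)$ isomorphic to $D_3$; because $D_3$ is the only group containing $D_3$ among the candidates $D_4, D_3, D_2, \Z_4, \Z_3, \Z_2$ established above, we conclude $\TSG_+(\Gamma_1) = D_3$. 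Next, since $K_{3,3,1}$ is $3$-connected and the edge $\{1,5\}$ is not fixed even setwise by any nontrivial element of $D_3$ (hence not pointwise fixed in $\Gamma_1$), the Subgroup Theorem applied to $\Gamma_1$ yields embeddings whose topological symmetry group is each subgroup of $D_3$, in particular $\Z_3$ and $\Z_2$. Finally I would build a second embedding $\Gamma_2$ admitting two commuting orientation-preserving order-$2$ homeomorphisms inducing $(14)(25)(36)$ and $(15)(24)(36)$ (their product is $(12)(45)$, so that all three nontrivial elements of this Klein four-group lie in conjugacy classes positively realizable for $K_{3,3}$, unlike the subgroup $\langle (12),(45)\rangle$). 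These generate a subgroup of $\TSG_+(\Gamma_2)$ isomorphic to $\Z_2 \times \Z_2 = D_2$, and since the only groups on the list containing $D_2$ are $D_2$ and the already-excluded $D_4$, we get $\TSG_+(\Gamma_2) = D_2$. Together these show $D_3$, $D_2$, $\Z_3$, $\Z_2$ are all positively realizable.

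The step I expect to be the main obstacle is the explicit construction of the embeddings $\Gamma_1$ and $\Gamma_2$ and the verification that the required finite-order homeomorphisms of $S^3$ genuinely exist with the fixed-point sets forced by Smith Theory --- a circle for each order-$2$ orientation-preserving map, necessarily passing through vertex $7$. In particular one must insert vertex $7$ and its six incident edges in a way that is truly compatible with the group action; this is exactly where the $Y$-shaped obstruction of Lemma \ref{no3cycle} must be respected, since it is what forbids $(123)$ and forces the use of $(123)(456)$. Once these pictures are produced, the remaining group-theoretic bookkeeping and the ``no larger group occurs'' conclusions follow immediately from the reduced list of candidates.
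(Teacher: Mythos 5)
Your forward direction is fine and is essentially the paper's argument (the paper excludes $D_4$ and $\Z_4$ by citing that they are not positively realizable for $K_{3,3}$ and applying the Subgraph Lemma; ruling out order-$4$ elements via Nikkuni--Taniyama's list is an equivalent element-level version of the same restriction argument). Your $D_2$ construction is also essentially the paper's: your subgroup $\{id,(14)(25)(36),(15)(24)(36),(12)(45)\}$ is exactly the group the paper realizes with the hexagonal embedding $\Gamma_2$, just presented with different generators.

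The genuine gap is in your $D_3$ construction, and it is not merely the expected ``drawing the picture'' difficulty: your choice of the involution $(23)(56)$ is fatal. Any finite-order orientation-preserving homeomorphism inducing $(23)(56)$ fixes the vertices $1,4,7$ and maps each of the edges $\{1,4\},\{1,7\},\{4,7\}$ to itself with fixed endpoints, hence fixes them pointwise (a finite-order homeomorphism of an arc fixing both endpoints is the identity); by Smith Theory its fixed set is a circle, so that circle is \emph{exactly} the embedded triangle $1$--$7$--$4$. But your group $\langle (123)(456),(23)(56)\rangle$ also contains $(12)(45)$ and $(13)(46)$, which by the same reasoning force the triangles $3$--$7$--$6$ and $2$--$7$--$5$ to be fixed circles of finite-order symmetries. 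In any symmetric realization (an honest finite group of rotations/isometries, which is what ``arrange the three triangles symmetrically about $C$ and add an order-$2$ rotation'' produces, and what the re-embedding results for $3$-connected graphs let one reduce to), the fixed-point sets of distinct orientation-preserving finite-order isometries are great circles, and two great circles meet in $0$ or $2$ points --- whereas two of your triangles meet in exactly one point, the image of vertex $7$. So the embedding you describe does not exist, and indeed this particular $D_3$ subgroup cannot be positively realized; consequently your derivation of $\Z_3$ and $\Z_2$ via the Subgroup Theorem applied to this $\Gamma_1$ also collapses. The paper avoids the problem by choosing the other conjugacy type of $D_3$ subgroup, generated by $(123)(456)$ and the side-swapping involution $(14)(26)(35)$, whose involutions fix only vertex $7$ and edge midpoints (no triangle is forced onto the axis); with that change your outline matches the paper's proof.
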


\begin{proof}  
Since $D_4$ and $\Z_4$ are not positively realizable for $K_{3,3}$ (by Theorem \ref{T:K33TSG+}), they are not positively realizable for $K_{3,3,1}$ by the Subgraph Lemma.  It only remains to show that $D_3$, $D_2$, $\mathbb{Z}_3$, and $\mathbb{Z}_2$ are positively realizable.  First consider the embedding $\Gamma_1$ in Figure \ref{F:K331D3}.  Let $h$ denote a $\frac{2\pi}{3}$ rotation about the axis perpendicular to the plane and passing through vertex $7$.  Then $h$ induces the automorphism $(123)(456)$.  Let $g$ denote the rotation of $180^{\circ}$ around the axis bisecting edges $\{1,4\}$ and $\{3,5\}$ and passing through vertex $7$. Then $g$ induces the automorphism $(14)(26)(35)$.  It follows that $\TSG_+(\Gamma_1)$ contains the subgroup $\langle h, g \  | \ hg=gh^{-1} \rangle \cong D_3$. But the only remaining possible topological symmetry group that contains $D_3$ is $D_3$ itself.  Hence $\TSG_+(\Gamma_1) \cong D_3$.  Since the edge $\{1,6\}$ is not fixed by any element of $\TSG_+(\Gamma_1)$, by the Subgroup Theorem there is another embedding $\Gamma_1'$ for which $\TSG_+(\Gamma_1') \cong \Z_3$.

Now, consider the embedding $\Gamma_2$ of $K_{3,3,1}$ given in Figure \ref{F:K331D2}. The arrows pointing outward from the hexagon denote the edges connected to vertex $7$, which is fixed to the point at infinity. Now, we induce the automorphism $(14)(25)(36)$ by rotating the hexagon by $\pi$, and we induce the automorphism $(12)(45)$ by rotating the hexagon about the axis containing the edge $\{3,6\}$. These two automorphisms generate a group isomorphic to $D_2$, so $D_2 \leq \TSG_+(\Gamma_2)$. But the only remaining possible topological symmetry group that contains $D_2$ is $D_2$ itself. Therefore, $\TSG_+(\Gamma_2) \cong D_2$. Since the edge $\{3,6\}$ is, once again, not fixed by any element of the topological symmetry group, the Subgroup Theorem tells us there is another embedding which positively realizes $\Z_2$.
\end{proof}

    \begin{figure} [htbp]
$$\scalebox{.8}{\includegraphics{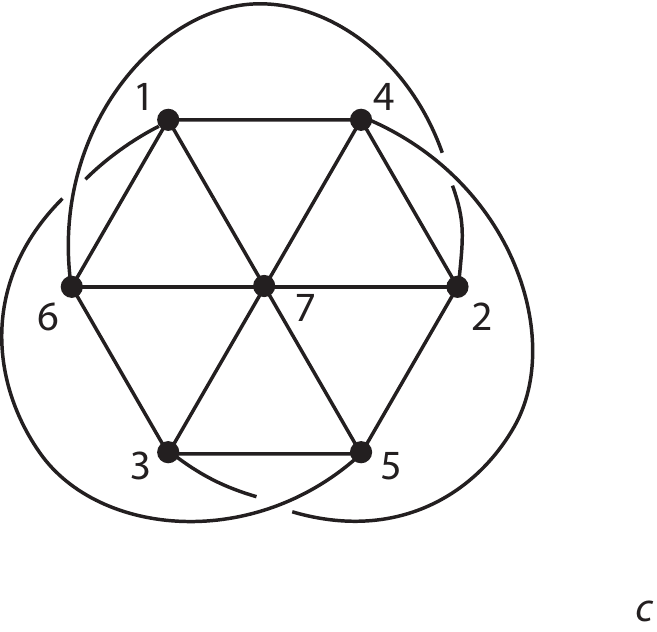}}$$
\caption{An embedding $\Gamma_1$ of the graph $K_{3,3,1}$ with $ \TSG_+(\Gamma_1) = D_3$.}
\label{F:K331D3}
\end{figure}

    \begin{figure} [htbp]
$$\scalebox{.45}{\includegraphics{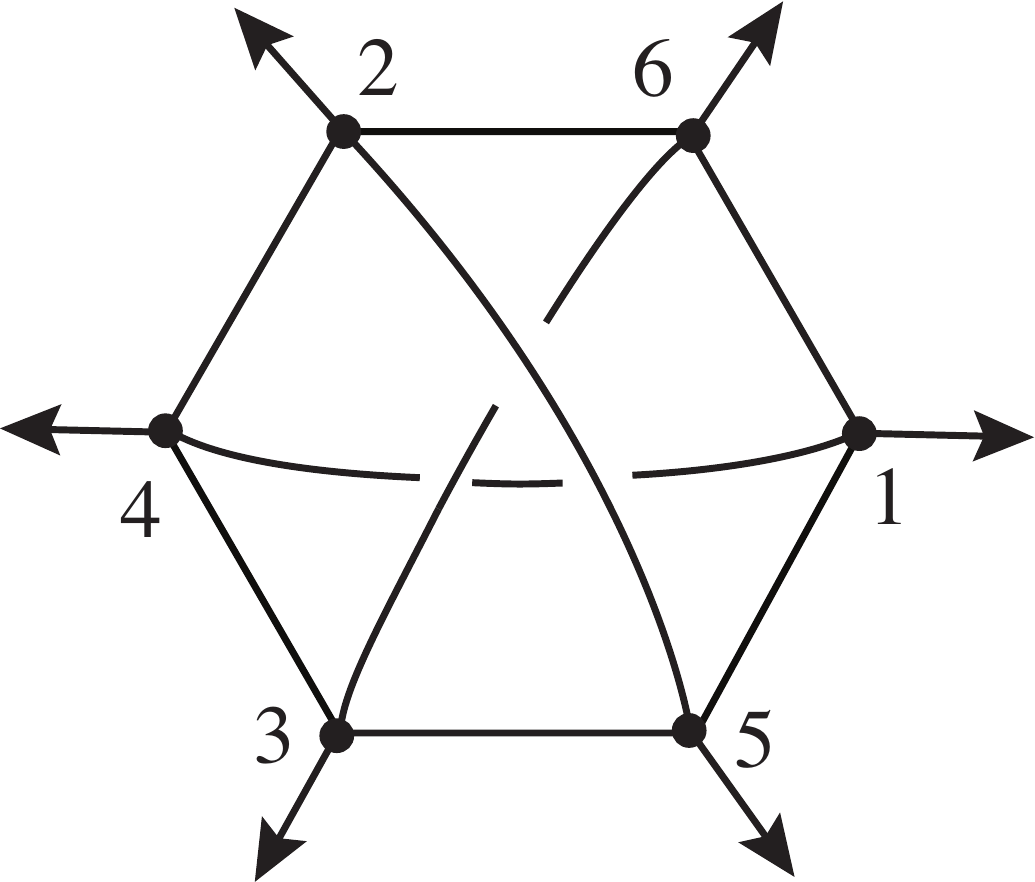}}$$
\caption{An embedding $\Gamma_2$ of the graph $K_{3,3,1}$ with $ \TSG_+(\Gamma_2) = D_2$. Vertex 7 is at $\infty$.}
\label{F:K331D2}
\end{figure}

\begin{theorem} \label{T:K331TSG}
The groups which are realizable but {\bf not} positively realizable for $K_{3,3,1}$ are $D_4$ and $\Z_4$.
\end{theorem}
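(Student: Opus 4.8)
The statement has two halves: $D_4$ and $\Z_4$ are (a) \emph{not} positively realizable and (b) realizable for $K_{3,3,1}$. Half (a) is already essentially contained in the proof of Theorem \ref{K331tsg}: by the Subgraph Lemma applied to the $K_{3,3}$ subgraph, if $\TSG_+(\Gamma)$ were $D_4$ or $\Z_4$ for some embedding $\Gamma$ of $K_{3,3,1}$, then $D_4$ (resp.\ $\Z_4$) would sit inside $\TSG_+$ of the induced $K_{3,3}$-embedding; but by Theorem \ref{T:K33subgroups} the only subgroups of $\Aut(K_{3,3})$ containing $D_4$ are $D_4$ and $(D_3\times D_3)\semi\Z_2$, and the only ones containing $\Z_4$ are $\Z_4$, $D_4$, $(\Z_3\times\Z_3)\semi\Z_4$ and $(D_3\times D_3)\semi\Z_2$, none of which is positively realizable for $K_{3,3}$ by Theorem \ref{T:K33TSG+}. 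So the whole job is half (b).

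\textbf{Realizing $D_4$.} I would build the embedding explicitly, writing $S^3=\mathbb{R}^3\cup\{\infty\}$ and placing vertex $7$ at $\infty$, so that $7$ is automatically fixed by every homeomorphism used. To realize $D_4=\langle(12),(1425)(36)\rangle$, put $1,4,2,5$ at the corners of a square in the $xy$-plane centered at the origin and $3,6$ on the $z$-axis above and below it; route the nine $K_{3,3}$-edges equivariantly (straight segments work once $3$ and $6$ are placed off the $xy$-plane so no two edges meet except at shared vertices), and run the six edges at vertex $7$ out to infinity: $\{3,7\},\{6,7\}$ along the two rays of the $z$-axis, and $\{1,7\},\{2,7\}$ (resp.\ $\{4,7\},\{5,7\}$) along the two rays of a line through the origin in the $xy$-plane. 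Then $(1425)(36)$ is induced by the orientation-reversing order-$4$ homeomorphism obtained by composing the $90^\circ$ rotation about the $z$-axis with the reflection $z\mapsto -z$, and $(12)$ is induced by a reflection in a suitable plane containing the $z$-axis; both maps carry the embedding $\Gamma_3$ to itself, so $D_4\le\TSG(\Gamma_3)$. Since the possible topological symmetry groups for $K_{3,3,1}$ are only $D_4,D_3,D_2,\Z_4,\Z_3,\Z_2$ and the trivial group, and $D_4$ is not properly contained in any of these, $\TSG(\Gamma_3)=D_4$.

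\textbf{Realizing $\Z_4$.} I would modify $\Gamma_3$ to $\Gamma_4$ by tying the negative amphichiral knot $8_{17}$ into each of the four square edges $\{1,4\},\{2,4\},\{2,5\},\{1,5\}$, oriented from their $\{1,2,3\}$-endpoint to their $\{4,5,6\}$-endpoint --- exactly the device used for $K_{3,3}$ in Theorem \ref{T:K33TSG}. For $\phi\in D_4$ to induce a symmetry of $\Gamma_4$, the knot $8_{17}$ must be carried to itself along each square edge; the image knot picks up a mirror iff the realizing homeomorphism is orientation-reversing and a reversal iff $\phi$ interchanges the two vertex classes of $K_{3,3}$, so (since $8_{17}$ is chiral, non-invertible, but isotopic to the reverse of its mirror image) this forces the homeomorphism to be orientation-preserving with $\phi$ class-preserving, or orientation-reversing with $\phi$ class-interchanging. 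Comparing the eight elements of $D_4$ against the Nikkuni--Taniyama realizability lists, these are precisely the elements of $\langle(1425)(36)\rangle=\Z_4$, and this $\Z_4$ is realized (it is generated by the order-$4$ homeomorphism of $\Gamma_3$ above, which is orientation-reversing and interchanges the classes). Since tying in knots can only shrink the topological symmetry group, $\Z_4\le\TSG(\Gamma_4)\le\TSG(\Gamma_3)=D_4$; as the only subgroups of $D_4$ containing $\Z_4$ are $\Z_4$ and $D_4$, and $D_4$ contains the excluded elements, we get $\TSG(\Gamma_4)=\Z_4$.

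\textbf{Main obstacle.} The only genuine work is producing $\Gamma_3$: exhibiting a concrete, crossing-free embedding of $K_{3,3,1}$ on which the prescribed $D_4$ acts amounts to routing the nine $K_{3,3}$-edges and the six edges at vertex $7$ equivariantly with no unwanted intersections --- the same kind of equivariant-placement bookkeeping that, handled carelessly, produced the contradiction in Lemma \ref{no6cycle}. In practice this is cleanest to present with a figure. Once a valid $\Gamma_3$ is in hand, everything else (the maximality argument identifying $\TSG(\Gamma_3)$ and $\TSG(\Gamma_4)$, and the routine knot-theoretic check that $8_{17}$ cuts $D_4$ down to $\Z_4$) follows from results already established.
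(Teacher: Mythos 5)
Your proposal is correct and follows essentially the same route as the paper: non-positive realizability comes from the Subgraph Lemma together with the $K_{3,3}$ results, $D_4$ is realized by an explicit embedding with vertex $7$ at $\infty$ admitting an order-$4$ rotary reflection inducing $(1425)(36)$ and a planar reflection inducing $(12)$, and $\Z_4$ is obtained by tying the negative amphichiral knot $8_{17}$ into the four edges of the orbit of $\{1,4\}$ (your orbit $\{1,4\},\{2,4\},\{2,5\},\{1,5\}$ is in fact the invariant one). The only loose step is the blanket claim that ``tying in knots can only shrink the topological symmetry group,'' which is not an established lemma here, but it is dispensable: your restriction of the possible groups for $K_{3,3,1}$ to $D_4,D_3,D_2,\Z_4,\Z_3,\Z_2$ together with the exclusion of all involutions outside $\langle(1425)(36)\rangle$ already forces $\TSG(\Gamma_4)=\Z_4$, which is exactly how the paper concludes.
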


\begin{proof}
We have already dealt with all other subgroups of $\Aut(K_{3,3,1})$, so it only remains to show that $D_4$ and $\Z_4$ are realizable.  Let $\Gamma_3$ of $K_{3,3,1}$, be the embedding in Figure \ref{F:K331D4}.  Here we are viewing the embedding in $\mathbb{R}^3$, together with the point at infinity (to give $S^3$). The vertices $3$ and $6$ are on a vertical straight line; the vertex $7$ is also on this line, at the point at infinity, denoted $\infty$.  The vertices $\{1, 3, 2\}$ are arranged along a straight line perpendicular to the line through vertices $\{3,6,\infty\}$ at vertex $3$, while the vertices $\{4, 6, 5\}$ are arranged along a straight line perpendicular to the line through vertices $\{3,6,\infty\}$ at vertex $6$, which is at right angles to the line containing $\{1, 3, 2\}$.  Let $h$ denote the homeomorphism obtained by rotating the axis containing $\{3,6,\infty\}$ by $\frac{\pi}{2}$ followed by a reflection about the plane perpendicular to this axis and bisecting edge $\{3,6\}$. Then $h$ induces the order $4$ automorphism $(1425)(36)$.  Let $g$ be the reflection of $\Gamma_3$ through the plane containing vertices $\{3,4,5,6, \infty\}$.  Then $g$ induces the automorphism $(12)$.  Therefore $\langle h,g \ | \ hg=gh^{-1} \rangle \cong D_4 \leq TSG(\Gamma_2)$.  However, the only proper subgroup of $\Aut(K_{3,3,1})$ that contains $D_4$ is $D_4$ itself, so this embedding has a topological symmetry group isomorphic to $D_4$.

To obtain an embedding $\Gamma_3$ with the symmetry group $\Z_4$, we add  the knot $8_{17}$ to edges $\{1,4\}$, $\{1,6\}$, $\{2,4\}$, and $\{2,6\}$.  The permutation $(12)$ must be realized by an orientation reversing homeomorphism taking $8_{17}$ to its mirror image which is impossible.  Therefore this embedding does not allow for $(12)$.  However it does allow for the permutation $(1425)(36)=(12)(14)(25)(36)$, realized by the homeomorphism $h$ above. This homeomorphism takes $8_{17}$ to the mirror image of its inverse; since $8_{17}$ is a negative amphicheiral knot, this is equivalent to the original knot.   Thus, $TSG(\Gamma_3)$ is reduced in this embedding to $\Z_4$.  
\end{proof}

    \begin{figure} [htbp]
$$\scalebox{.7}{\includegraphics{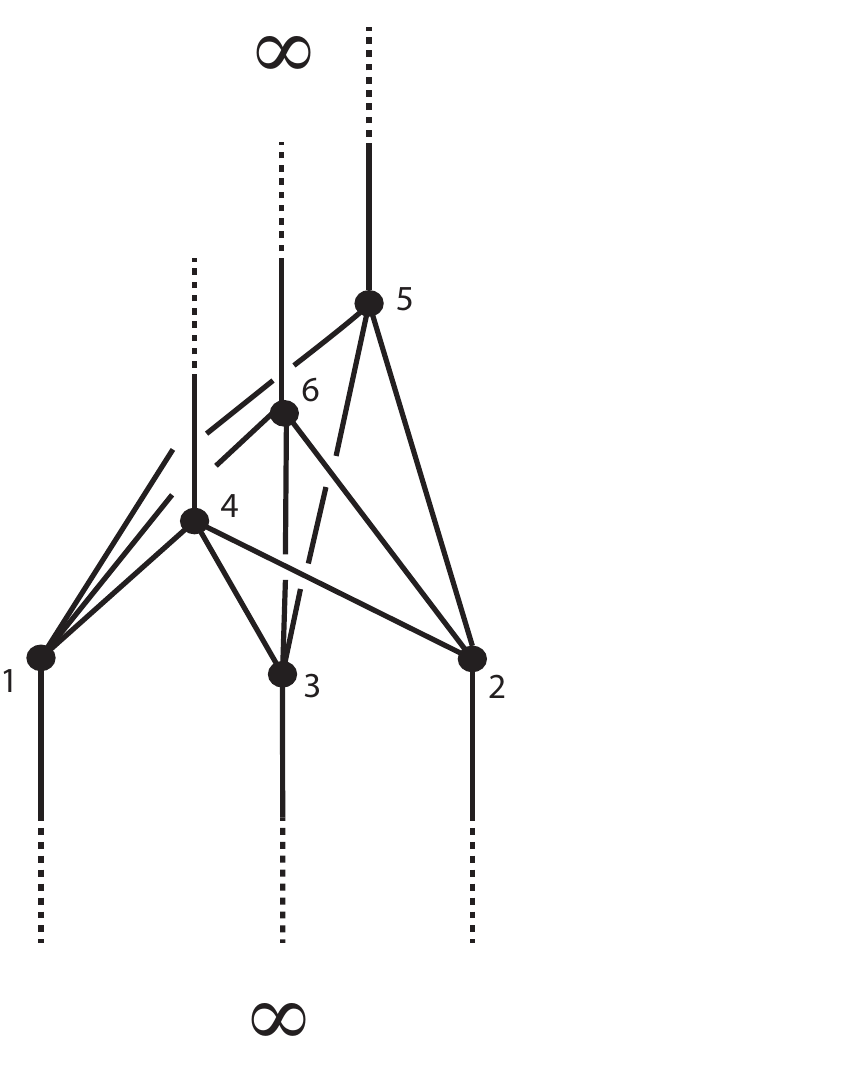}}$$
\caption{An embedding $\Gamma_3$ of the graph $K_{3,3,1}$ with $ \TSG_+(\Gamma_3) = D_4$. Vertex 7 is at $\infty$.}

\label{F:K331D4}
\end{figure}


\section{The graph $K_{4,4}^-$} 
\label{S:K44}

 The Petersen graph $K_{4,4}^-$ is the complete bipartite graph $K_{4,4}$ with an edge removed, with vertex sets labeled $\{1,2,3\}$, $\{4,5,6\}$, and $\{v,w\}$, as shown in Figure \ref{F:K44}.  
 
 \begin{figure} [h]
$$\scalebox{.8}{\includegraphics{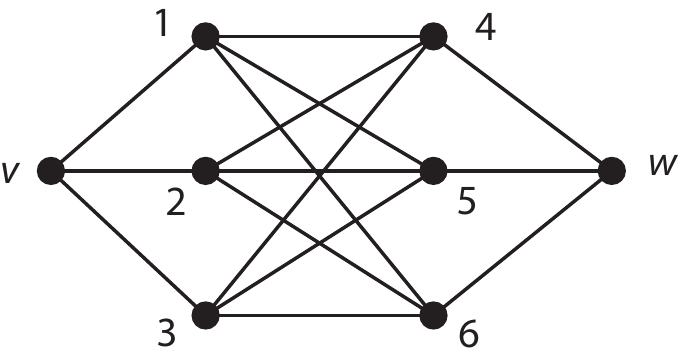}}$$
\caption{The graph $K_{4,4}^-$.}
\label{F:K44}
\end{figure}
 

Like $K_{3,3,1}$, $K_{4,4}^-$ contains a $K_{3,3}$ subgraph (induced by the vertices $\{1, 2, 3, 4, 5, 6\}$) which is fixed by every automorphism of the graph.  Moreover, an automorphism of $K_{4,4}^-$ interchanges $v$ and $w$ if it interchanges the sets $\{1, 2, 3\}$ and $\{4, 5, 6\}$, and fixes $v$ and $w$ otherwise.  So $\Aut(K_{4,4}^-) \cong \Aut(K_{3,3})$. Hence, as with $K_{3,3,1}$, every realizable automorphism of $K_{4,4}^-$ must also be realizable for $K_{3,3}$.  The converse, however, is not true. As with $K_{3,3,1}$, automorphisms conjugate to $(123)$ are not realizable.  The proof is almost identical to Lemma \ref{no3cycle}, and the details are left to the reader.
 
\begin{lemma}
\label{no3cycle2}
No automorphism conjugate to $(123)$ is realizable for any embedding of $K_{4,4}^-$. 
\end{lemma}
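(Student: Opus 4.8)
The plan is to transcribe the proof of Lemma~\ref{no3cycle} almost verbatim, replacing the role of the universal vertex $7$ of $K_{3,3,1}$ by one of the two distinguished vertices of $K_{4,4}^-$. Recall that the two parts of $K_{4,4}^-$ have vertex sets $\{1,2,3,v\}$ and $\{4,5,6,w\}$ with the edge $\{v,w\}$ deleted, so that (relabeling $v$ and $w$ if necessary) $v$ is adjacent to each of $4$, $5$, $6$; in particular the automorphism $(123)$ fixes each of the vertices $v$, $4$, $5$, $6$. This star $v\cup\{v,4\}\cup\{v,5\}\cup\{v,6\}$, which is a copy of the ``Y'' graph, will play exactly the part that the subgraph on $\{4,5,6,7\}$ with edges $\{4,7\},\{5,7\},\{6,7\}$ played in Lemma~\ref{no3cycle}.

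First I would argue that any realizing homeomorphism is orientation preserving: suppose toward a contradiction that $(123)$ is realized by a homeomorphism $h'$ of $(S^3,\Gamma')$ for some embedding $\Gamma'$ of $K_{4,4}^-$. Restricting $h'$ to the $K_{3,3}$ subgraph on $\{1,\dots,6\}$ (which, as noted, is preserved setwise by every automorphism of $K_{4,4}^-$) gives a homeomorphism realizing the automorphism $(123)$ on $K_{3,3}$; since $(123)$ is positively realizable and not negatively realizable for $K_{3,3}$ (Section~\ref{SS:K33subgps}, from \cite{nt}), $h'$ must be orientation preserving. By the Finite Order Theorem there is then a re-embedding $\Gamma$ of $K_{4,4}^-$ for which $(123)$ is induced by a \emph{finite order} orientation-preserving homeomorphism $h$ of $(S^3,\Gamma)$, and by Smith Theory $\fix(h)$ is either empty or homeomorphic to $S^1$.

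Finally I would derive the contradiction from the Y-shaped fixed set. Since $h$ fixes the vertices $v$, $4$, $5$, $6$, it carries each of the arcs $\{v,4\}$, $\{v,5\}$, $\{v,6\}$ to itself setwise; and a finite order homeomorphism of an arc that fixes both endpoints is the identity on that arc, so $h$ fixes each of these arcs pointwise. Hence $\fix(h)$ contains the star $v\cup\{v,4\}\cup\{v,5\}\cup\{v,6\}\cong K_{1,3}$, which is nonempty and does not embed in $S^1$ — contradicting Smith Theory. Therefore $(123)$ is not positively realizable for $K_{4,4}^-$, and since any homeomorphism realizing $(123)$ was shown to be orientation preserving, $(123)$ is not realizable for any embedding; conjugating, no automorphism conjugate to $(123)$ is realizable. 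The only point requiring a moment's care (and the only step that is not a literal copy of Lemma~\ref{no3cycle}) is identifying which of $v,w$ is adjacent to all of $\{4,5,6\}$ and noting the ``setwise fixed $\Rightarrow$ pointwise fixed'' upgrade for the arcs; neither is a genuine obstacle.
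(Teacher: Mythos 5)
Your proposal is correct and is essentially the proof the paper intends: the paper explicitly says the argument is ``almost identical to Lemma \ref{no3cycle}'' with details left to the reader, and your adaptation — orientation-preservation via the $K_{3,3}$ restriction and \cite{nt}, the Finite Order Theorem, and the Smith Theory contradiction from the $Y$-shaped star at the vertex $v$ adjacent to $4,5,6$ — is exactly that adaptation. Your extra remark that a finite-order homeomorphism fixing an arc setwise with fixed endpoints fixes it pointwise only makes explicit a step the paper leaves implicit.
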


As for $K_{3,3,1}$, this means that no group that contains $\Z_3 \times \Z_3$ is realizable for $K_{4,4}^-$; the proof is the same as Lemma \ref{Z3Z3}.  We can now reduce our list of possible topological symmetry groups for embeddings of $K_{4,4}^-$ to the following: $D_6$, $D_4$, $D_3$, $D_2$, $\mathbb{Z}_6$, $\Z_4$, $\mathbb{Z}_3$,  $\mathbb{Z}_2$. 

\begin{theorem}\label{K44tsg}
A nontrivial group is positively realizable for $K_{4,4}^-$ if and only if it is one of:  $D_6$, $D_3$, $D_2$, $\mathbb{Z}_6$, $\Z_3$ and $\mathbb{Z}_2$.  \end{theorem}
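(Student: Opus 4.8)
The plan is to split into the ``only if'' and ``if'' directions. The list of possible topological symmetry groups has already been narrowed to $D_6, D_4, D_3, D_2, \Z_6, \Z_4, \Z_3, \Z_2$, so for ``only if'' I only need to eliminate $D_4$ and $\Z_4$, and for ``if'' I need to positively realize $D_6, D_3, D_2, \Z_6, \Z_3, \Z_2$. The ``only if'' direction is the argument already used for $K_{3,3,1}$: every automorphism of $K_{4,4}^-$ fixes the $K_{3,3}$ subgraph on $\{1,\dots,6\}$ setwise, so the Subgraph Lemma gives $\TSG_+(\Gamma)\le\TSG_+(\Gamma')$ for the induced embedding $\Gamma'$ of $K_{3,3}$, and by Theorem \ref{T:K33TSG+} none of the groups positively realizable for $K_{3,3}$ contains $D_4$ or $\Z_4$; hence neither is positively realizable for $K_{4,4}^-$.

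For the ``if'' direction the plan is to build a single embedding $\Gamma$ with $\TSG_+(\Gamma)\cong D_6$ and then apply the Subgroup Theorem, since $D_6$ contains subgroups isomorphic to each of $\Z_6, D_3, \Z_3, D_2, \Z_2$. To construct $\Gamma$, view $S^3$ as the unit sphere in $\mathbb{C}^2$ and take $h(z_1,z_2)=(\zeta z_1,-z_2)$ with $\zeta=e^{i\pi/3}$: this is an orientation-preserving homeomorphism of order $6$ with $\fix(h)=\emptyset$ and $\fix(h^2)=\{0\}\times S^1$. I would place vertex $1$ at a point $(a,b)$ with $a,b\in\mathbb{R}$ and $a\neq 0$, so that $1,4,2,5,3,6$ lie on a single $h$-orbit of size $6$ (realizing the $6$-cycle $(142536)$), and place $v=(0,1)$ and $w=(0,-1)$ on the circle $\fix(h^2)$, so that $h$ interchanges them (realizing $(vw)$); then $h$ induces $(142536)(vw)$. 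This is precisely the point at which $K_{4,4}^-$ avoids the obstruction of Lemma \ref{no6cycle} for $K_{3,3,1}$: there the single extra vertex must be \emph{fixed} by the order-$6$ homeomorphism, forcing $\fix(h)\cong S^1$ and a Smith-theory contradiction, whereas here the two extra vertices are \emph{interchanged} and sit on $\fix(h^2)$. Next I would adjoin the orientation-preserving involution $g(z_1,z_2)=(\bar z_1,\bar z_2)$; one checks $ghg^{-1}=h^{-1}$ and that $g$ fixes $1,5,v,w$ while interchanging $2\leftrightarrow 3$ and $4\leftrightarrow 6$, so $g$ induces $(23)(46)$. Drawing the edges of $\Gamma$ equivariantly for the group $\langle h,g\rangle\cong D_6$ then gives $D_6\le\TSG_+(\Gamma)$, and since $D_6$ is not properly contained in any group on the list of possibilities for $K_{4,4}^-$, in fact $\TSG_+(\Gamma)=D_6$.

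It then remains to verify the hypothesis of the Subgroup Theorem. In $\Gamma$ the only nontrivial element of $\TSG_+(\Gamma)=D_6$ that fixes vertex $1$ is the involution $g$ inducing $(23)(46)$, and $g$ moves vertex $6$; hence the edge $\{1,6\}$ is pointwise fixed by no nontrivial element of $\TSG_+(\Gamma)$. So by the Subgroup Theorem, for every $H\le D_6$ there is an embedding of $K_{4,4}^-$ whose orientation-preserving topological symmetry group is $H$, and letting $H$ run through $D_6,\Z_6,D_3,\Z_3,D_2$ and $\Z_2$ realizes all six groups. Together with the ``only if'' direction this completes the proof.

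The main obstacle I expect is the $D_6$ construction itself: realizing $(142536)(vw)$ by an orientation-preserving finite-order homeomorphism, and recognizing that the Smith-theory argument that rules out order-$6$ symmetries of $K_{3,3,1}$ does not recur here because $v$ and $w$ are swapped rather than fixed. Once that construction is in place, pinning $\TSG_+(\Gamma)$ down to $D_6$ and checking the free-edge condition for the Subgroup Theorem are both routine.
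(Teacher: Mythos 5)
Your overall strategy matches the paper's (Subgraph Lemma plus Theorem \ref{T:K33TSG+} to kill $D_4$ and $\Z_4$, then one embedding with $\TSG_+ \cong D_6$ plus the Subgroup Theorem applied to a free edge), and the ``only if'' half is fine. The gap is in your explicit $D_6$ construction, and it is fatal already at the level of realizing $\Z_6$. With $\zeta = e^{i\pi/3}$ your map satisfies $h^3(z_1,z_2) = (\zeta^3 z_1, -z_2) = (-z_1,-z_2)$, the antipodal map of $S^3$, which is fixed-point free. But $h^3$ induces the automorphism $((142536)(vw))^3 = (15)(26)(34)(vw)$, which sends each of the edges $\{1,5\}$, $\{2,6\}$, $\{3,4\}$ to itself with its endpoints interchanged. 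If an involution maps an embedded arc to itself swapping its endpoints, its restriction to the arc has an interior fixed point (intermediate value theorem on a parametrization), so any embedding of $K_{4,4}^-$ with your vertex placement that is invariant under $h$ would force $\fix(h^3) \neq \emptyset$ --- contradiction. So no equivariant drawing of the edges exists for your $h$; the phrase ``drawing the edges equivariantly'' cannot be carried out. (This is exactly the structural point the paper's construction is engineered around: its glide rotation is rotation by $2\pi/3$ about the $vw$-axis together with a half-turn about the waist, i.e.\ in your coordinates $h(z_1,z_2) = (e^{2\pi i/3}z_1, -z_2)$, whose cube is $(z_1,z_2)\mapsto(z_1,-z_2)$, a half-turn whose fixed circle passes through the midpoints of the three inverted edges $\{1,5\},\{2,6\},\{3,4\}$.)

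There is a second, independent problem with your involution $g(z_1,z_2)=(\bar z_1,\bar z_2)$. It fixes the vertices $1,5,v,w$ and fixes the abstract edges $\{1,w\},\{1,5\},\{5,v\}$; an involution that maps an arc to itself fixing both endpoints must fix the arc pointwise, so these three arcs would have to lie in $\fix(g)$, the real great circle. With your placement, $5=h^3(1)$ is the antipode of $1$ and $w=-v$, so the two antipodal pairs $\{1,5\}$ and $\{v,w\}$ interleave on that circle, and the path $w$--$1$--$5$--$v$ cannot be embedded in it without an edge passing through $v$ or $w$. Even after you correct $h$ to the $2\pi/3$ glide rotation, any homeomorphism realizing a vertex-fixing involution such as $(23)(46)$ must contain this path in its fixed circle in the cyclic order $w,1,5,v$, so the position of vertex $1$ relative to $v,w$ has to be chosen with care (this is what the paper's Figure \ref{F:K44edgeD6} arranges, using instead the fixed-point-free-on-vertices generator $(16)(25)(34)(vw)$ as the explicit reflection). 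So the theorem is true and your outline is the right shape, but the specific homeomorphisms you wrote down do not preserve any embedding with your vertex placement; you need the paper's glide rotation (or an equivalent) and a compatible choice of base point for the orbit.
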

\begin{proof}
Since $D_4$ and $\Z_4$ are not positively realizable for $K_{3,3}$, they are not positively realizable for $K_{4,4}^-$ by the Subgraph Lemma. It remains to show that the other groups are positively realizable. Let $\Gamma_1$ be the embedding of $K_{4,4}^-$ in Figure \ref{F:K44edgeD6} where vertices $v$ and $w$ are placed at antipodal points of a geodesic circle and equidistant from the plane through the midpoints of edges $\{1,5\}$, $\{3,4\}$ and $\{2,6\}$.  This embedding has a glide rotation $h$ obtained by rotating the picture by  $\frac{2\pi}{3}$ around the axis going through vertices $v$ and $w$ while rotating by $\pi$ around the circular waist of the picture.  Then $h$ induces the order $6$ automorphism $(142536)(vw)$.  Consider the line perpendicular to the axis containing $v$ and $w$ and passing through the midpoint of $\{3,4\}$.  A rotation by $\pi$ about this line is a homeomorphism $g$ that induces $(16)(25)(34)(vw)$.  Since $hg=gh^{-1}$ it follows that $D_6 \leq \TSG_+(\Gamma_1)$; but since $D_6$ is the largest remaining possible group, this means $\TSG_+(\Gamma_1) \cong D_6$.  Since the edge $\{1,4\}$ is not fixed by any element of this group, the Subgroup Theorem implies that every subgroup of $D_6$ is also positively realizable for some embedding of $K_{4,4}^-$. 
\end{proof}

    \begin{figure} [htbp]
$$\scalebox{.6}{\includegraphics{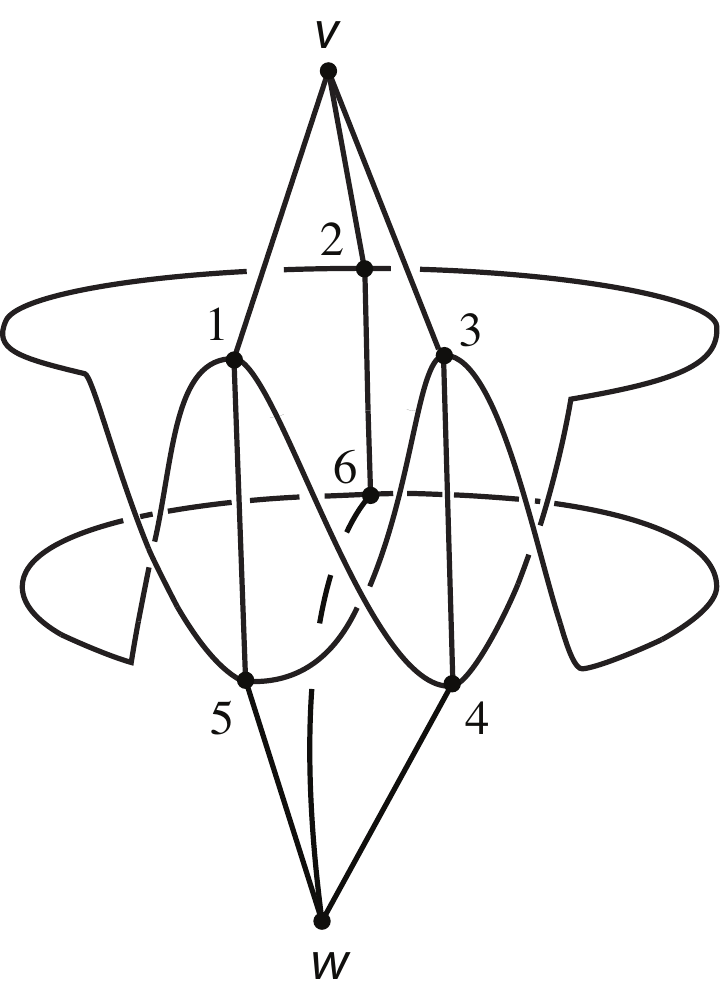}}$$
\caption{An embedding $\Gamma_1$ of the graph $K_{4,4}^-$ with $ \TSG_+(\Gamma_2) = D_6$.}
\label{F:K44edgeD6}
\end{figure}


\begin{theorem} \label{T:K44edgeTSG2}
The groups which are realizable but {\bf not} positively realizable for $K_{4,4}^-$ are $D_4$ and $\Z_4$.
\end{theorem}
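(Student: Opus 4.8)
The plan is to follow the template already established for $K_{3,3}$ (Theorem \ref{T:K33TSG}) and $K_{3,3,1}$ (Theorem \ref{T:K331TSG}): all subgroups of $\Aut(K_{4,4}^-)$ other than $D_4$ and $\Z_4$ have already been accounted for (either shown positively realizable in Theorem \ref{K44tsg}, or excluded because they contain $\Z_3 \times \Z_3$ by Lemma \ref{no3cycle2}, or excluded as $D_6, \Z_6$ — wait, those are positively realizable; the point is every group on the list $D_6, D_4, D_3, D_2, \Z_6, \Z_4, \Z_3, \Z_2$ except $D_4$ and $\Z_4$ was handled in Theorem \ref{K44tsg}). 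Since $D_4$ and $\Z_4$ are not positively realizable (Theorem \ref{K44tsg}), it suffices to exhibit, for each of $D_4$ and $\Z_4$, an embedding of $K_{4,4}^-$ whose topological symmetry group is isomorphic to that group. I would do this by adapting the $K_{3,3,1}$ constructions in Theorem \ref{T:K331TSG}, since $\Aut(K_{4,4}^-) \cong \Aut(K_{3,3,1}) \cong \Aut(K_{3,3})$ and the relevant automorphisms $(1425)(36)$ and $(12)$ play the same role.

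First I would construct the $D_4$ embedding. Place vertices $3$ and $6$ on a vertical axis (with, say, $v$ and $w$ also on this axis, or symmetrically placed with respect to it — I need to check which placement is fixed by both the order-$4$ map and the reflection, since an automorphism interchanging $\{1,2,3\}$ and $\{4,5,6\}$ must swap $v$ and $w$), arrange $\{1,3,2\}$ on a horizontal line through $3$ and $\{4,6,5\}$ on a perpendicular horizontal line through $6$, just as in Figure \ref{F:K331D4}. Let $h$ be the screw motion: rotation by $\pi/2$ about the vertical axis followed by reflection in the horizontal plane bisecting edge $\{3,6\}$; this should induce $(1425)(36)$ together with the appropriate action on $\{v,w\}$ (either fixing both or swapping them, depending on the geometry — this is the detail to verify). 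Let $g$ be reflection in the plane containing $3,4,5,6$ (and $v,w$), inducing $(12)$. Then $\langle h, g\rangle \cong D_4 \leq \TSG(\Gamma)$, and since the only proper subgroup of $\Aut(K_{4,4}^-)$ containing $D_4$ is $D_4$ itself, $\TSG(\Gamma) \cong D_4$. To get $\Z_4$, add the negative amphichiral knot $8_{17}$ to the four edges $\{1,4\}$, $\{1,6\}$, $\{2,4\}$, $\{2,6\}$ (the orbit of an edge under $h$): the chirality of $8_{17}$ rules out the orientation-reversing $(12)$, while $h$ takes each of these edges to the inverse of its mirror image, which is isotopic to $8_{17}$, so $h$ survives; thus the group drops to $\Z_4$.

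The main obstacle — really the only non-routine point — is checking that the candidate homeomorphisms act on $\{v,w\}$ in a way consistent with being automorphisms of $K_{4,4}^-$, and in particular arranging the embedding so that both $h$ and $g$ are genuine symmetries of the pair $(S^3,\Gamma)$ simultaneously. Since $K_{4,4}^-$ has the two extra vertices $v$ and $w$ each joined to all of $\{1,2,3\}$ and $\{4,5,6\}$ (minus the one missing edge, say $\{v,w\}$ is not an edge but rather one of the bipartite edges is absent), I must place $v$ and $w$ and route their $7$ edges symmetrically; the order-$4$ element $h$ has $h^2 = (12)(45)$ fixing $\{1,2,3\}$ and $\{4,5,6\}$ setwise, so $h^2$ fixes $v$ and $w$, meaning $h$ either fixes both or swaps them, and I need the embedding compatible with whichever it is. Once the picture is drawn correctly (an analogue of Figure \ref{F:K331D4}), the rest of the argument is identical to the $K_{3,3,1}$ case: the subgroup lattice of $\Aut(K_{4,4}^-)$ (which equals that of $\Aut(K_{3,3})$, Theorem \ref{T:K33subgroups}) forces the topological symmetry group up to the claimed value, and the knot-tying argument using $8_{17}$'s negative amphichirality cuts it down from $D_4$ to $\Z_4$.
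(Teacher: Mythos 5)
Your proposal follows essentially the same route as the paper's proof: the paper also takes the $K_{3,3,1}$ embedding of Figure \ref{F:K331D4}, places $v$ and $w$ symmetrically on the axis through $3$ and $6$ (so the screw motion $h$ induces $(1425)(36)(vw)$, swapping $v$ and $w$ exactly as you anticipated), uses the reflection $g$ inducing $(12)$, invokes the same subgroup-lattice fact that $D_4$ is the only proper subgroup of $\Aut(K_{4,4}^-)$ containing $D_4$, and then ties $8_{17}$ into the edges $\{1,4\}$, $\{1,6\}$, $\{2,4\}$, $\{2,6\}$, using chirality to kill $(12)$ and negative amphichirality to keep the order-$4$ element, reducing the group to $\Z_4$. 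The one detail you flagged as needing verification (the action on $\{v,w\}$) is resolved in the paper exactly as you suggested, so there is no substantive difference.
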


\begin{proof}
It only remains to show that $D_4$ and $\Z_4$ are realizable.  Let $\Gamma_2$ be the embedding of $K_{4,4}^-$ in Figure \ref{F:K44edgeD4}.  The vertices $\{1, 2, 3, 4, 5, 6\}$ are arranged as in Figure \ref{F:K331D4}; the vertices $v$ and $w$ are placed symmetrically on the axis containing  the edge $\{3,6\}$.  Let $h$ denote the homeomorphism obtained by rotating the axis containing $\{3,6,v,w\}$ by $\frac{\pi}{2}$ followed by a reflection about the plane perpendicular to this axis and bisecting edge $\{3,6\}$. Then $h$ induces the order $4$ automorphism $(1425)(36)(vw)$.  Let $g$ be the reflection of $\Gamma_2$ through the plane containing vertices $\{3,4,5,6,v,w\}$.  Then $g$ induces the automorphism $(12)$.  Therefore $\langle h,g \ | \ hg=gh^{-1} \rangle \cong D_4 \leq TSG(\Gamma_2)$.  However, the only proper subgroup of $\Aut(K_{3,3,1})$ that contains $D_4$ is $D_4$ itself, so this embedding has a topological symmetry group isomorphic to $D_4$.

To obtain an embedding $\Gamma_3$ with the symmetry group $\Z_4$, as above, we add knot $8_{17}$ to edges $\{1,4\}$, $\{1,6\}$, $\{2,4\}$, and $\{2,6\}$.  The permutation $(12)$ must be realized by an orientation reversing homeomorphisms which takes $8_{17}$ to its mirror image which is impossible.  Therefore this embedding does not allow for $(12)$.  However it does allow for the permutation $(1425)(36)(vw)=(12)(14)(25)(36)(vw)$, generated by $h$. This takes $8_{17}$ to itself in each edge (since $8_{17}$ is negative amphicheiral). Thus, $\TSG(\Gamma_3)$ is reduced to $\Z_4$.  
\end{proof}

    \begin{figure} [htbp]
$$\scalebox{.7}{\includegraphics{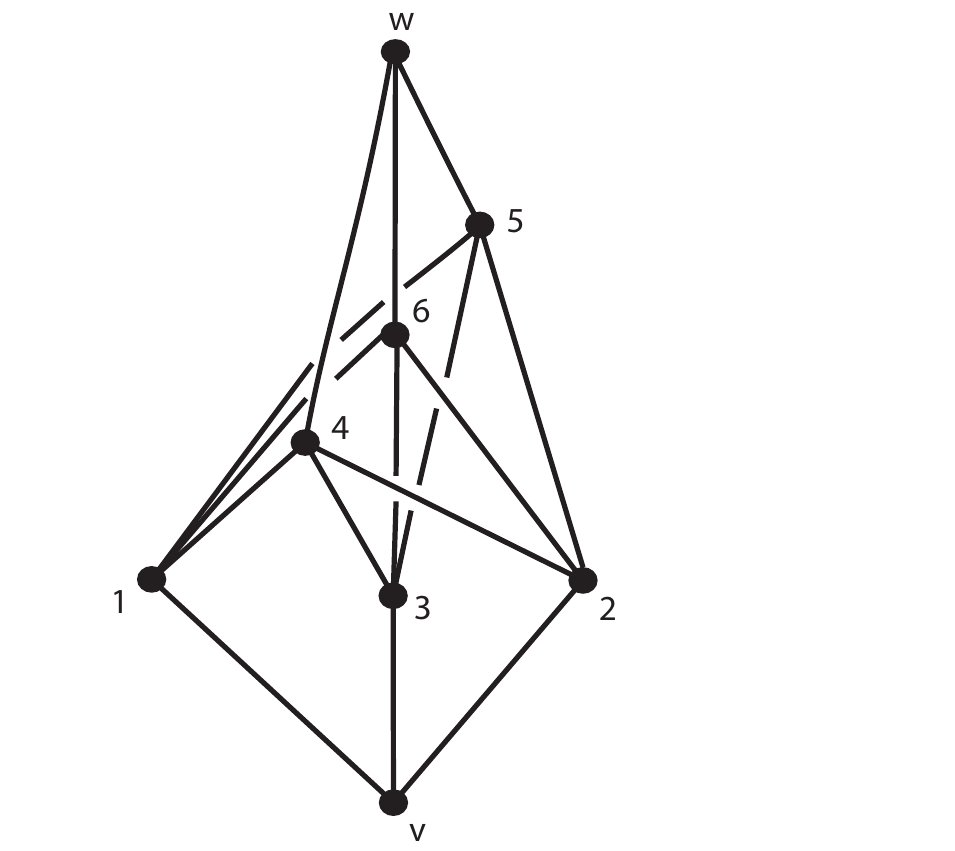}}$$
\caption{An embedding $\Gamma_2$ of the graph $K_{4,4}^-$ with $ \TSG_+(\Gamma_2) = D_4$.}
\label{F:K44edgeD4}
\end{figure}

\section{The graph $P_7$}\label{S:P7}

Consider the graph $P_7$ with the vertices labeled as in Figure \ref{F:P7}. We begin by determining the automorphism group of the abstract graph $P_7$.

\begin{figure} [htbp]
$$\scalebox{.8}{\includegraphics{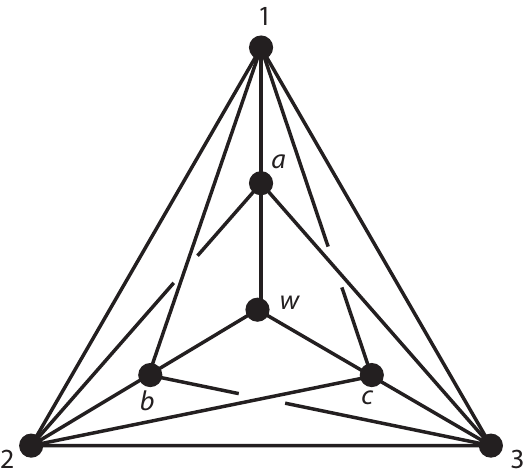}}$$
\caption{An embedding $\Gamma$ of the graph $P_7$ with $\TSG_+(\Gamma) = D_3$.}
\label{F:P7}
\end{figure}

\begin{theorem}\label{T:P7aut}
The automorphism group $\Aut(P_7)$ is isomorphic to $D_3 \times D_3$.
\end{theorem}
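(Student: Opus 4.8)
The plan is to read the combinatorial structure of $P_7$ directly off Figure~\ref{F:P7} and then pin down $\Aut(P_7)$ by a degree argument together with a routine verification. Concretely, $P_7$ is built from a copy of $K_{3,3}$ on the six vertices $\{1,2,3\}$ and $\{4,5,6\}$ (all nine bipartite edges present), three further edges making one of the two triples a triangle (say $\{4,5,6\}$), and a seventh vertex $7$ joined to each vertex of the other triple $\{1,2,3\}$; in particular there are no edges inside $\{1,2,3\}$ and no edge joining $7$ to $\{4,5,6\}$. Reading off degrees, vertex $7$ has degree $3$, each of $1,2,3$ has degree $4$, and each of $4,5,6$ has degree $5$, so $P_7$ has degree sequence $(3,4,4,4,5,5,5)$. (If the figure labels the two triples in the opposite roles, the argument below is unchanged after relabeling.)

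First I would establish the inclusion $\Aut(P_7)\hookrightarrow S_3\times S_3$. Since any automorphism preserves vertex degree, it must fix the unique degree-$3$ vertex $7$, must permute the degree-$4$ triple $\{1,2,3\}$ among itself, and must permute the degree-$5$ triple $\{4,5,6\}$ among itself. Hence every automorphism of $P_7$ is determined by a pair $(\alpha,\beta)\in \mathrm{Sym}\{1,2,3\}\times\mathrm{Sym}\{4,5,6\}\cong S_3\times S_3$, giving the inclusion.

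Next I would prove the reverse inclusion by checking that every such pair $(\alpha,\beta)$, extended to fix $7$, actually is an automorphism; it suffices to see that it preserves each of the three edge classes separately. The triangle on $\{4,5,6\}$ is preserved because $\beta$ permutes all unordered pairs from $\{4,5,6\}$; the nine edges between $\{1,2,3\}$ and $\{4,5,6\}$ form a complete bipartite graph and so are preserved by any independent permutations of the two sides; and the three edges $\{7,1\},\{7,2\},\{7,3\}$ are permuted among themselves since $7$ is fixed and $\alpha$ permutes $\{1,2,3\}$. Combining the two inclusions yields $\Aut(P_7)\cong S_3\times S_3\cong D_3\times D_3$, using $S_3\cong D_3$.

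I do not expect a substantive obstacle here: once the edge set is described, this is a bookkeeping argument. The only point that genuinely needs care is reading the structure of $P_7$ off Figure~\ref{F:P7} correctly — in particular confirming that the bipartite part is all of $K_{3,3}$ (so that independent permutations of the two triples are automatically allowed) and that no additional edge (inside $\{1,2,3\}$, or from $7$ into $\{4,5,6\}$) is present which would further restrict the group.
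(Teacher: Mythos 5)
Your proposal is correct and follows essentially the same route as the paper: partition the vertices by degree (the unique degree-$3$ vertex is fixed, and the degree-$4$ and degree-$5$ triples are each preserved setwise), then observe that since the bipartite part is a full $K_{3,3}$, one triple carries a triangle, the other is independent, and the degree-$3$ vertex is joined exactly to the degree-$4$ triple, any pair of independent permutations of the two triples is an automorphism, giving $\Aut(P_7)\cong S_3\times S_3\cong D_3\times D_3$. Your labeling of which triple carries the triangle is the reverse of the paper's (there the degree-$5$ vertices $1,2,3$ form the triangle and $w$ attaches to $a,b,c$), but as you note this is only a relabeling and does not affect the argument.
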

\begin{proof}
As we can see from Figure \ref{F:P7}, $P_7$ has three vertices (labeled 1, 2, 3) of degree 5, three vertices (labeled $a, b, c$) of degree 4, and one vertex (labeled $w$) of degree 3. So every automorphism must fix each of these three sets (setwise). Moreover, the vertices in each set have the same neighbors (and within each set, the vertices are either all adjacent, or all independent), so the vertices in each set can be permuted independently.  Hence $\Aut(P_7) = S_3 \times S_3 \times S_1 \cong D_3 \times D_3$.
\end{proof}

The subgroups of $D_3 \times D_3$ are $D_3 \times D_3$, $D_3 \times \Z_3$, $D_3 \times \Z_2$, $(\Z_3 \times \Z_3) \rtimes \Z_2$, $\Z_3 \times \Z_3$, $\Z_3 \times \Z_2$, $\Z_2 \times \Z_2$, $D_3$, $\Z_3$, $\Z_2$ and the trivial group (this can easily be checked with a program such as GAP). We make two important observations: \begin{enumerate}
	\item An automorphism of $P_7$ that fixes the vertices 1, 2 and 3 will fix the 3-cycle through these vertices {\em and} the vertex $w$.  Hence, by Smith Theory, it cannot be realized by a non-trivial orientation-preserving homeomorphism.
	\item An automorphism of $P_7$ that fixes the vertices $a$, $b$ and $c$ will fix the subgraph induced by $\{a, b, c, w\}$, which does not embed in a circle.  So, again by Smith Theory, such an automorphism cannot be realized by a non-trivial orientation-preserving homeomorphism.
\end{enumerate}

\begin{theorem}\label{T:P7op}
A nontrivial group is positively realizable for $P_7$ if and only if it is one of: $D_3$, $\Z_3$ or $\Z_2$.
\end{theorem}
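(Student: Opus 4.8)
The plan is to combine a purely group-theoretic restriction with three explicit embeddings. Recall from Theorem \ref{T:P7aut} that $\Aut(P_7)\cong D_3\times D_3$, where the first factor permutes the degree-$5$ vertices $\{1,2,3\}$ and the second permutes the degree-$4$ vertices $\{a,b,c\}$, with $w$ fixed; let $\pi_1,\pi_2:D_3\times D_3\to D_3$ denote the two coordinate projections.

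\emph{Restriction.} Suppose $G\le\Aut(P_7)$ is positively realizable and let $\phi=(\sigma,\tau)\in G$ be non-trivial. If $\sigma=\mathrm{id}$ then $\phi$ fixes $1,2,3$, so by observation (1) preceding the theorem (applying the Finite Order Theorem and Smith Theory, since the triangle on $\{1,2,3\}$ together with $w$ cannot lie in the fixed-point set of a finite-order orientation-preserving homeomorphism) $\phi$ is not positively realizable, a contradiction; hence $\sigma\ne\mathrm{id}$. Symmetrically, observation (2) forces $\tau\ne\mathrm{id}$. Therefore $\ker(\pi_1|_G)$ is trivial, so $\pi_1|_G:G\hookrightarrow D_3$ is injective and $G$ is isomorphic to a subgroup of $D_3$, i.e.\ $G\in\{D_3,\Z_3,\Z_2,\{\mathrm{id}\}\}$.

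\emph{Realization.} The embedding $\Gamma$ in Figure \ref{F:P7} admits an order-$3$ rotation inducing $(123)(abc)$ and an order-$2$ orientation-preserving symmetry inducing a reflection of the diagonal $D_3$ inside $D_3\times D_3$; these generate a subgroup $D_3\le\TSG_+(\Gamma)$, and since by the restriction $\TSG_+(\Gamma)$ is itself a subgroup of $D_3$, we conclude $\TSG_+(\Gamma)\cong D_3$. To obtain $\Z_3$ and $\Z_2$ I would apply the Subgroup Corollary: $P_7$ is $3$-connected, and a triangle edge such as $\{1,2\}$ is pointwise fixed by no non-trivial element of $\TSG_+(\Gamma)\cong D_3$ — both order-$3$ elements move $1$ and $2$, and the unique involution preserving $\{1,2\}$ interchanges its endpoints and so does not fix the edge pointwise. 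Hence every subgroup of $D_3$, in particular $\Z_3$ and $\Z_2$, is realized by some re-embedding, which together with the restriction completes the proof.

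\emph{Main obstacle.} The only delicate step is the Subgroup-Corollary application: one must confirm, for the specific embedding drawn in Figure \ref{F:P7}, that the chosen edge is pointwise fixed by no non-trivial symmetry. Inspecting the embedding shows that among the nine $K_{3,3}$-edges joining $\{1,2,3\}$ to $\{a,b,c\}$ exactly three (a matching) are pointwise fixed by an involution, as are all three $w$-edges, so one should use a triangle edge (or one of the six remaining $K_{3,3}$-edges); this check is routine. Everything else is immediate from the observations preceding the theorem together with the Finite Order Theorem, Smith Theory, and the Subgroup Corollary.
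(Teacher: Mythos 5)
Your proposal is correct and follows essentially the same route as the paper: the two Smith Theory observations (via the Finite Order Theorem) make the projection to the first $D_3$ factor injective on any $\TSG_+(\Gamma)$, so only subgroups of $D_3$ can occur, and the embedding of Figure \ref{F:P7} realizes $D_3$, with the Subgroup Corollary applied to the edge $\{1,2\}$ yielding $\Z_3$ and $\Z_2$. Your extra care in checking that $\{1,2\}$ is not \emph{pointwise} fixed (the involution preserving it swaps its endpoints) and in deducing $\TSG_+(\Gamma)\cong D_3$ from the upper bound is a slight refinement of the paper's argument, not a different approach.
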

\begin{proof}
Suppose $\Gamma$ is an embedding of $P_7$, and $H = \TSG_+(\Gamma)$.  Then $H$ is isomorphic to a subgroup of $D_3 \times D_3$, where the first factor corresponds to permutations of $\{1, 2, 3\}$ and the second factor corresponds to permutations of $\{a, b, c\}$ (the vertex $w$ is fixed by every element of $H$). Suppose $(\sigma, \rho) \in H$. If there is a second permutation $\rho'$ such that $(\sigma, \rho') \in H$, then $(\sigma, \rho)(\sigma^{-1}, (\rho')^{-1}) = (id, \rho(\rho')^{-1}) \in H$. But by our observations above, this means $\rho(\rho')^{-1} = id$ as well, so $\rho' = \rho$. So for each permutation $\sigma$, there is at most one permutation $\rho$ such that $(\sigma, \rho) \in H$. So the projection to the first component gives an isomorphism from $H$ to a subgroup of $D_3$. So every positively realizable group is isomorphic to a subgroup of $D_3$.

Conversely, consider the embedding $\Gamma$ of $P_7$ in Figure \ref{F:P7}. $\TSG_+(\Gamma) \cong D_3$, generated by the rotation of order 3 around the axis through vertex $w$ (perpendicular to the page), and the rotation of order 2 around the axis through vertices 1, $a$ and $w$.  Since the edge $\{1,2\}$ is not fixed by any element of this group, the Subgroup Theorem there are embeddings of $P_7$ which positively realize every subgroup of $D_3$.
\end{proof}

Since the subgroups of $D_3$ are positively realizable, they are also realizable. It remains to ask whether any subgroups of $D_3 \times D_3$ are realizable, but not positively realizable.

\begin{figure} [htbp]
$$\scalebox{1}{\includegraphics{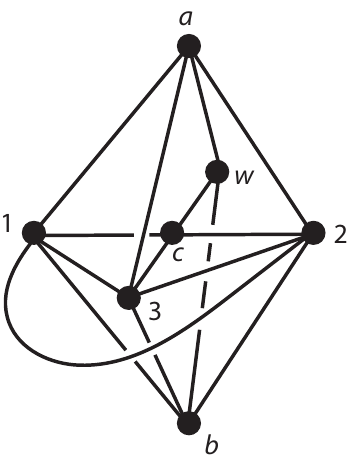}}$$
\caption{An embedding $\Gamma$ of the graph $P_7$ with $\TSG(\Gamma) = \Z_2 \times \Z_2$.}
\label{F:P7Z2xZ2}
\end{figure}

\begin{theorem}\label{T:P7or}
The only group which is realizable, but not positively realizable, for $P_7$ is $\Z_2 \times \Z_2$.
\end{theorem}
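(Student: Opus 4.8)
The plan is to show two things: (1) that $\Z_2 \times \Z_2$ is realizable for $P_7$, via the explicit embedding in Figure~\ref{F:P7Z2xZ2}; and (2) that no other non-positively-realizable subgroup of $D_3 \times D_3$ can occur, so that $\Z_2 \times \Z_2$ is the \emph{only} such group. By Theorem~\ref{T:P7op}, the positively realizable groups are exactly the subgroups of $D_3$, namely $D_3$, $\Z_3$, $\Z_2$ and the trivial group. Since (as noted in the excerpt, following \cite{cf}) any positively realizable group is also realizable, what remains is to determine which of the subgroups $D_3 \times D_3$, $D_3 \times \Z_3$, $D_3 \times \Z_2$, $(\Z_3 \times \Z_3)\semi\Z_2$, $\Z_3 \times \Z_3$, $\Z_3 \times \Z_2$, $\Z_2 \times \Z_2$ are realizable by embeddings with an orientation-reversing symmetry.

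For the upper bound, I would first observe that any $\TSG(\Gamma)$ that strictly contains its positively realizable subgroup $\TSG_+(\Gamma)$ must contain an orientation-reversing homeomorphism $h$; by the Finite Order Theorem we may take $h$ finite order, and by Smith Theory $\fix(h) \cong S^0$ or $S^2$. Combined with observations (1) and (2) from the text — any automorphism fixing $\{1,2,3\}$ pointwise, or fixing $\{a,b,c\}$ pointwise, fixes a $Y$-shaped (or triangle-plus-vertex) subgraph and hence cannot be realized orientation-\emph{preservingly} — the argument of Theorem~\ref{T:P7op} already forces $\TSG_+(\Gamma)$ to inject into $D_3$ via projection to the first factor. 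One then checks which overgroups of such a $\TSG_+(\Gamma)$ inside $D_3 \times D_3$ have index $2$ and can be generated by adjoining one involution compatible with the Smith-theoretic constraints on $\fix(h)$. The key point is that an order-$3$ automorphism of $P_7$ moving $\{1,2,3\}$ is positively realizable but an orientation-\emph{reversing} finite-order homeomorphism cannot induce an order-$3$ map (its square would be orientation-preserving of order $3$ with the same or smaller fixed set, which by Smith theory must be $\emptyset$ or $S^1$, but the fixed set of $h$ is $S^0$ or $S^2$, forcing a contradiction with the fixed $Y$-subgraph); this rules out every candidate containing a $\Z_3$ in an essential way, leaving only groups whose image in $D_3 \times D_3$ lies in $\Z_2 \times \Z_2$. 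Of these, $\Z_2$ is positively realizable, so the only new possibility is $\Z_2 \times \Z_2$ itself.

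For the realization, I would exhibit the embedding $\Gamma$ of Figure~\ref{F:P7Z2xZ2} and identify two commuting involutions: one orientation-preserving (a $\pi$-rotation, say swapping two of the vertices $1,2,3$ and correspondingly two of $a,b,c$) and one orientation-reversing (a reflection in a sphere, fixing the third vertex of each triple setwise), whose product is the remaining orientation-reversing involution. One verifies these are symmetries of the pictured embedding, that they generate $\Z_2 \times \Z_2$, and that $\TSG(\Gamma)$ is no larger: since $\TSG_+(\Gamma) \supseteq \Z_2$ is a subgroup of $D_3$ that, together with the realized order-$2$ quotient, forces $\TSG(\Gamma)$ into the list above, and we have already excluded everything between $\Z_2\times\Z_2$ and $D_3\times D_3$ except $\Z_2 \times \Z_2$, the embedding realizes exactly $\Z_2 \times \Z_2$.

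The main obstacle I expect is the bookkeeping in the upper-bound step: carefully enumerating the index-$2$ extensions of each admissible $\TSG_+$ and applying the Smith-theoretic fixed-point constraints to each, being careful that an orientation-reversing homeomorphism's \emph{powers} are orientation-preserving and so inherit the $\emptyset$-or-$S^1$ restriction, which is what ultimately kills all the $\Z_3$-containing candidates. The realization step should be routine once the right picture is chosen — the reflection sphere and rotation axis need to be arranged so that the two involutions genuinely commute and no accidental extra symmetry (e.g. an order-$3$ rotation) is introduced.
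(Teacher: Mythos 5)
Your overall skeleton is the paper's: confine $\TSG_+(\Gamma)$ to subgroups of $D_3$ via Theorem \ref{T:P7op}, use Smith Theory together with the pointwise-fixed $Y$-subgraph to exclude every remaining candidate subgroup of $D_3\times D_3$, and realize $\Z_2\times\Z_2$ by the octahedral embedding of Figure \ref{F:P7Z2xZ2} with two reflections whose product is a $\pi$-rotation; that last part matches the paper essentially verbatim, including the argument that nothing larger can occur because all overgroups of $\Z_2\times\Z_2$ have already been excluded.

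The gap is that your ``key point'' is asserted rather than proved, and as stated it is not quite the right claim. First, ``an orientation-reversing finite-order homeomorphism cannot induce an order-$3$ map'' is false in general and is not what you need; also an order-$3$ automorphism ``moving $\{1,2,3\}$'' is positively realizable only when it simultaneously permutes $\{a,b,c\}$, whereas $(123)$ with $a,b,c$ fixed is exactly what observation (2) forbids. Your parenthetical is also garbled: $\fix(h)\subseteq\fix(h^2)$, so the square has the same or \emph{larger} fixed set, and the contradiction does not come from comparing $\fix(h)\cong S^0$ or $S^2$ with $\fix(h^2)$. What is actually needed, and what the paper proves, is: (i) an order-$3$ automorphism fixing $\{1,2,3\}$ or $\{a,b,c\}$ pointwise is not realizable by \emph{any} homeomorphism, since an orientation-preserving realization contradicts observations (1)--(2) (via the Finite Order Theorem and Smith Theory), while for an orientation-reversing realization $h$ the square $h^2$ is orientation-preserving, nontrivial, and induces another such automorphism, giving the same contradiction; and (ii) an explicit check that every candidate group other than $\Z_2\times\Z_2$ forces such an element into $\TSG(\Gamma)$. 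You wave at (ii) with ``every candidate containing a $\Z_3$ in an essential way,'' but $D_3$ and $\Z_3$ contain $\Z_3$ and are realizable, so the claim needs the precise form: every order-$6$ element of $D_3\times D_3$ is a $3$-cycle on one triple times a transposition on the other, hence its square is a forbidden order-$3$ element (this is how the paper kills $\Z_6\cong\Z_3\times\Z_2$ and $D_6\cong D_3\times\Z_2$), and every subgroup of order divisible by $9$ contains the unique Sylow $3$-subgroup $\Z_3\times\Z_3$, hence a forbidden element (the paper instead eliminates these large groups with the bound $\vert\TSG(\Gamma)\vert\le 2\vert D_3\vert=12$ together with parity). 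Once (i) and (ii) are written out, your argument closes and coincides with the paper's; as written, the decisive exclusion of $\Z_6$ and $D_6$ is missing.
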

\begin{proof}
Suppose $\Gamma$ is an embedding of $P_7$ with $H = \TSG(\Gamma) \neq \TSG_+(\Gamma)$.  Then $\TSG_+(\Gamma)$ is a normal subgroup of $H$ of index 2.  Since $\TSG_+(\Gamma)$ is a subgroup of $D_3$, $\vert \TSG_+(\Gamma) \vert \leq \vert D_3 \vert = 6$, so $\vert H\vert \leq 12$.  Hence $H \neq D_3 \times D_3, D_3 \times \Z_3,$ or $(\Z_3 \times \Z_3) \rtimes \Z_2$. Also, $\vert H\vert$ must be even, so $H \neq \Z_3 \times \Z_3$. It remains to consider $D_3 \times \Z_2$, $\Z_3 \times \Z_2$ and $\Z_2 \times \Z_2$.

Suppose $H$ has an element $h$ of order 6. Since $h$ is not in a subgroup of $D_3$, it must be orientation-reversing. Since $H \leq D_3 \times D_3$, $h = (\sigma, \rho)$ for some $\sigma, \rho \in D_3$.  Without loss of generality, suppose $\sigma$ is a 3-cycle and $\rho$ is a 2-cycle; we can assume $h$ induces the permuation $(123)(ab)(c)(w)$ on the vertices of $P_7$.  Then $h^2$ induces the permutation $(132)(a)(b)(c)(w)$.  But this means $h^2$ is an non-trivial orientation-preserving homeomorphism that fixes the subgraph induced by $\{a, b, c, w\}$, which violates Smith Theory. So $H$ cannot have an element of order 6, which means $H \neq D_3 \times \Z_2$ or $\Z_3 \times \Z_2$. The only possibility that remains is $\Z_2 \times \Z_2$.

Consider the embedding of $P_7$ shown in Figure \ref{F:P7Z2xZ2}. Here the vertices $1, 2, 3, a, b, w$ are placed at the vertices of a regular octahedron, and $c$ is placed at the center of the octahedron. The edge $\{1,2\}$ is in the horizontal plane containing vertices $1, 2, 3, c, w$.  Let $h$ be the reflection in the vertical plane containing the vertices $3, a, b, c, w$, interchanging vertices 1 and 2, and let $g$ be the reflection in the horizontal plane, interchanging vertices $a$ and $b$. Then $hg$ is the rotation of order 2 about the axis through vertices $3, c, w$. So $\TSG(\Gamma) \cong \Z_2 \times \Z_2$.  Hence $\Z_2 \times \Z_2$ is the only group which is realizable, but not positively realizable for $P_7$.
\end{proof}

\section{The graph $P_8$}\label{S:P8}

Consider the graph $P_8$ with the vertices labeled as in Figure \ref{F:P8}. We begin by determining the automorphism group of the abstract graph $P_8$.

\begin{figure} [htbp]
$$\scalebox{.7}{\includegraphics{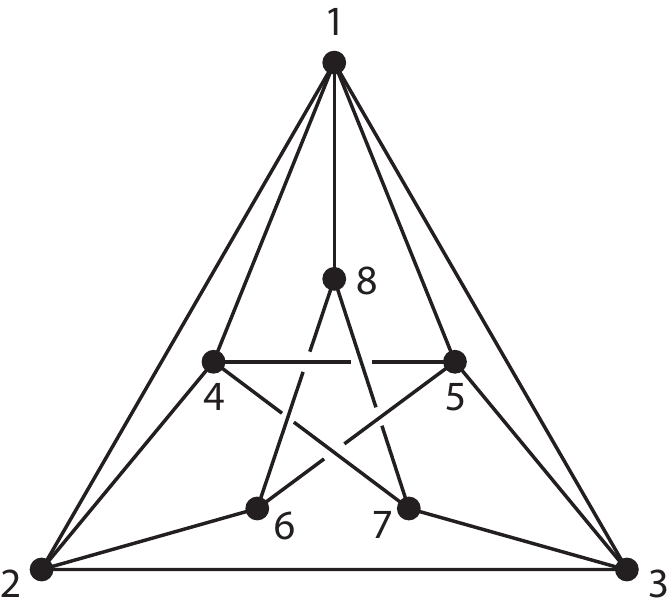}}$$
\caption{An embedding $\Gamma$ of the graph $P_8$ with $\TSG(\Gamma) = \Z_2$.}
\label{F:P8}
\end{figure}

\begin{theorem} \label{T:P8aut}
The automorphism group $\Aut(P_8)$ is isomorphic to $D_4$.
\end{theorem}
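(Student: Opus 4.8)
The plan is to mimic the degree-counting argument used for $P_7$ in the proof of Theorem~\ref{T:P7aut}, but paying closer attention to adjacencies, since the automorphisms of $P_8$ cannot all be chosen independently. First I would read the degree sequence off Figure~\ref{F:P8}: $P_8$ has exactly one vertex of degree $5$, four vertices of degree $4$, and three vertices of degree $3$ (degree sum $5+16+9=30$). Every automorphism preserves each of these three sets setwise, so in particular the unique degree-$5$ vertex $u$ is fixed by every automorphism.

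Next I would analyze neighborhoods. The facts to extract from the figure are: (i) exactly one of the three degree-$3$ vertices, call it $z$, is adjacent to $u$, hence $z$ is fixed by every automorphism; (ii) each of the other two degree-$3$ vertices $x,y$ has exactly two neighbors among the four degree-$4$ vertices, and these two pairs are disjoint, so the degree-$4$ vertices split into a pair $\{p,q\}$ adjacent to $x$ and a pair $\{p',q'\}$ adjacent to $y$; and (iii) the two vertices in each pair have the same neighborhood in all of $P_8$ (so they are ``twins''). From (iii) the transpositions $(p\,q)$ and $(p'\,q')$ are automorphisms. From (i)--(ii), any automorphism either fixes both $x$ and $y$ or interchanges them; in the latter case it must carry $\{p,q\}$ to $\{p',q'\}$ and $\{p',q'\}$ to $\{p,q\}$. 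A short case check then shows that every vertex permutation compatible with these constraints actually preserves all the edges, and that there are exactly $8$ of them: the four that fix $x$ and $y$ (namely $e$, $(p\,q)$, $(p'\,q')$, and $(p\,q)(p'\,q')$) and four more that interchange $x$ and $y$ (two of order $2$ and two of order $4$).

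Finally I would identify the group: taking $a=(p\,q)$, $b=(p'\,q')$, and $t$ an automorphism interchanging $x$ with $y$, conjugation by $t$ swaps $a$ and $b$, so $\Aut(P_8)\cong(\Z_2\times\Z_2)\semi\Z_2$ with the factor-swapping action; setting $r=at$ one verifies $r^4=t^2=1$ and $trt=r^{-1}$, which identifies this group with $D_4$. The only real work is the bookkeeping in the middle step --- checking from the figure exactly which vertices are adjacent, confirming that the two pairs of degree-$4$ neighbors of $x$ and $y$ are disjoint, and making sure none of the eight candidate permutations is ruled out by a hidden adjacency relation. This is routine, but it must be done carefully to be sure the group is $D_4$ rather than some other group of order $8$, such as $\Z_2\times\Z_2\times\Z_2$.
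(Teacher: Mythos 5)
Your proposal is correct and is essentially the paper's argument: both proofs fix the unique degree-5 vertex and the unique degree-3 vertex adjacent to it, and then show that an automorphism is determined by a permutation of the four degree-4 vertices respecting the two disjoint pairs singled out by the remaining degree-3 vertices, which yields a group of order 8. The only difference is cosmetic: the paper identifies that group at once as the automorphism group $D_4$ of the 4-cycle induced on the degree-4 vertices, whereas you recover it as $(\Z_2\times\Z_2)\rtimes\Z_2\cong D_4$ via the twin transpositions; your deferred edge-preservation check does go through, since the degree-4 vertices induce a 4-cycle whose non-adjacent (antipodal) pairs are exactly your twin pairs $\{p,q\}$ and $\{p',q'\}$, so all eight candidate permutations are indeed automorphisms.
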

\begin{proof}
We first observe that vertex 1 is the only vertex of degree five, and hence is fixed by every automorphism of $P_8$.  Since vertex 8 is the only vertex of degree three which is adjacent to vertex 1, it is also fixed by every automorphism.

The vertices 2, 3, 4, 5 of degree four are fixed as a set by every automorphism of $P_8$, and form a 4-cycle in the graph. Hence the group of automorphisms of this subgraph is isomorphic to $D_4$.  Since vertices 6 and 7 are each adjacent to a different pair of non-adjacent vertices in the 4-cycle, every automorphism of the 4-cycle induces a unique permutation of the set $\{6, 7\}$ (and of course fixes vertices 1 and 8). So every automorphism of the 4-cycle induces a unique automorphism of $P_8$.  Hence $\Aut(P_8) \cong D_4$.
\end{proof}

The subgroups of $D_4$ are $D_4$, $\Z_4$, $D_2$, $\Z_2$ and the trivial group. We will see that only $\Z_2$ and the trivial group are either realizable or positively realizable for $P_8$.

\begin{theorem} \label{T:P8}
A nontrivial group is realizable or positively realizable for $P_8$ if and only if it is $\Z_2$.
\end{theorem}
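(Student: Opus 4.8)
The plan is to prove that $|\TSG(\Gamma)| \le 2$ for every embedding $\Gamma$ of $P_8$, and then to exhibit an embedding attaining $\Z_2$. Combined with the observation (following Theorem~\ref{T:P8aut}) that $\Aut(P_8) \cong D_4$ has, up to isomorphism, only the subgroups $D_4$, $\Z_4$, $D_2$, $\Z_2$, and the trivial group, this yields the stated classification for both $\TSG$ and $\TSG_+$.

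First I will pin down the concrete description of $P_8$ used implicitly in the proof of Theorem~\ref{T:P8aut}: vertex $1$ has degree $5$ and vertex $8$ is the unique degree-three vertex adjacent to it, so both are fixed by every automorphism; the remaining degree-three vertices $6$ and $7$ are adjacent to $8$ but not to $1$ or to each other; and the $4$-cycle on $\{2,3,4,5\}$ carries the $D_4$-action, with $\Aut(P_8)=\langle r,s\rangle$ where $r=(2345)(67)$ has order $4$ and $s=(35)$. The crucial observation is that the central involution $r^2=(24)(35)$ fixes the vertices $1,6,7,8$, and hence fixes pointwise the subgraph $Y$ consisting of the edges $\{1,8\}$, $\{6,8\}$, $\{7,8\}$ — a ``Y'' (a copy of $K_{1,3}$) centered at vertex $8$ — which embeds neither in $S^1$ nor in a two-point set.

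The main step is to show that $(24)(35)$ is not realizable for any embedding of $P_8$. Assume it were; by the Finite Order Theorem we may take it to be induced by a finite-order homeomorphism $h$ of $(S^3,\Gamma)$. If $h$ is orientation preserving, Smith Theory forces $\fix(h)\in\{\emptyset,S^1\}$, contradicting $Y\subseteq\fix(h)$. So $h$ is orientation reversing, whence $\fix(h)$ is $S^0$ or $S^2$ by Smith Theory; since $Y$ does not embed in two points, $\fix(h)=F\cong S^2$. Then $F$ separates $S^3$ into two balls, which $h$ must interchange (otherwise $h$ would be orientation preserving on each ball, hence on $S^3$). Since $h$ swaps $2\leftrightarrow 4$ and $3\leftrightarrow 5$, each ball contains exactly one of $\{2,4\}$ and one of $\{3,5\}$, so the vertices of the $4$-cycle $2$-$3$-$4$-$5$ are split $2$–$2$ across $F$; hence at least one edge $e$ of this $4$-cycle has its endpoints in different balls and therefore meets $F$. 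But $e\cap F\subseteq\fix(h)$ gives $e\cap F = h(e\cap F)\subseteq h(e)$, where $h(e)$ is the image of $e$ under $(24)(35)$, namely the other edge in its two-element orbit $\{\{2,3\},\{4,5\}\}$ or $\{\{3,4\},\{2,5\}\}$. In either orbit the two edges share no vertex, so $e$ and $h(e)$ are disjoint in the embedding, and thus $e\cap F=\emptyset$, a contradiction.

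To conclude, I note that every subgroup of $D_4$ of order at least $4$ — $D_4$ itself, $\Z_4=\langle r\rangle$, and the two Klein four-subgroups $\langle r^2,s\rangle$ and $\langle r^2,rs\rangle$ — contains the central element $r^2=(24)(35)$. Hence if some $\TSG(\Gamma)$ had order $\ge 4$, then $(24)(35)$ would be realizable, which we have excluded; so $|\TSG(\Gamma)|\le 2$ for every embedding, and a fortiori $|\TSG_+(\Gamma)|\le 2$. For the converse I will describe an embedding with an orientation-preserving involutive symmetry inducing $(23)(45)(67)$: place vertices $1$ and $8$ on a round circle $C$ together with the midpoints of the edges $\{2,3\}$ and $\{4,5\}$, and embed the rest symmetrically so that the $\pi$-rotation about $C$ swaps $2\leftrightarrow 3$, $4\leftrightarrow 5$, $6\leftrightarrow 7$ (this is the embedding of Figure~\ref{F:P8}); since $(23)(45)(67)$ fixes pointwise only the edge $\{1,8\}$, this is consistent with Smith Theory. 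Then $\Z_2\le\TSG_+(\Gamma)$, and the bound above forces $\TSG_+(\Gamma)=\TSG(\Gamma)=\Z_2$, so $\Z_2$ is positively realizable, and hence realizable. The main obstacle is the orientation-reversing case in the previous paragraph — excluding a ``reflection-type'' realization of $(24)(35)$ with $\fix\cong S^2$ — which is handled by exploiting the $h$-invariant $4$-cycle on $\{2,3,4,5\}$, none of whose edges is fixed setwise; everything else is routine bookkeeping with the subgroup lattice of $D_4$ together with Smith Theory.
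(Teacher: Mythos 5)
Your proof is correct, and it relies on the same two geometric obstructions as the paper — Smith Theory applied to the pointwise-fixed ``Y'' on $\{1,6,7,8\}$ in the orientation-preserving case, and a fixed $2$-sphere separating the swapped $4$-cycle vertices in the orientation-reversing case — but you organize them around a different key lemma, and the reorganization is genuinely cleaner. The paper never isolates your central claim (that the center of $\Aut(P_8)\cong D_4$, i.e.\ the automorphism acting antipodally on the $4$-cycle and fixing $1,6,7,8$, is unrealizable in either orientation); instead it shows every element of $\TSG(\Gamma)$ has order two, bounds $\TSG_+(\Gamma)\le\Z_2$ by a parity/product argument, and then excludes $D_2$ by a separate bookkeeping step determining which three involutions such a group would have to contain, applying the sphere argument only to the orientation-reversing element fixing $6$ and $7$. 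Your observation that every subgroup of $D_4$ of order at least $4$ contains the center collapses all of this: one lemma simultaneously rules out $D_4$, $\Z_4$ and both Klein four-subgroups and gives $\lvert\TSG(\Gamma)\rvert\le 2$ for every embedding, and it records the slightly stronger fact (not stated in the paper) that this particular automorphism is never realizable. Two small caveats, neither of which puts you below the paper's own standard of rigor: the complementary regions of a topologically embedded $2$-sphere in $S^3$ need not be balls, so your Alexander-trick justification that $h$ must interchange the two sides is not quite complete in the wild case (the paper simply asserts that the swapped vertices lie on opposite sides, with no argument at all); and the inclusion of the Y in $\fix(h)$ uses that a finite-order homeomorphism of an arc fixing both endpoints is the identity, which both you and the paper leave implicit. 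Finally, your labeling of the $4$-cycle ($2$-$3$-$4$-$5$, central element $(24)(35)$) differs from the one implicit in the paper's figure, where the corresponding element is $(25)(34)$; since you fix your conventions explicitly, this is immaterial.
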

\begin{proof}
Suppose $\Gamma$ is an embedding of $P_8$. We first observe that any non-trivial element $h \in \TSG(\Gamma)$ must either fix or interchange the vertices 6 and 7.  Since $h$ must also fix vertices 1 and 8, this means that if $h$ fixes 6 and 7, then it fixes the subgraph induced by $\{1, 6, 7, 8\}$.  This subgraph does not embed in a circle, so by Smith Theory $h$ cannot be an orientation-preserving homeomorphism of $S^3$. So any orientation-preserving element of $\TSG(\Gamma)$ must interchange vertices 6 and 7.

But for any non-trivial $h \in \TSG(\Gamma)$, $h^2$ is an orientation preserving homeomorphism which fixes vertices 6 and 7 (since $h$ either fixes or interchanges them).  So $h^2$ must be the identity.  Hence every non-trivial element of $\TSG(\Gamma)$ has order 2. This means $\TSG(\Gamma)$ cannot be isomorphic to $D_4$ or $\Z_4$.

If $h$ and $g$ are both non-trivial orientation-preserving elements of $\TSG(\Gamma)$, then they both interchange vertices 6 and 7, so the product $hg$ is an orientation-preserving homeomorphism of $S^3$ that fixes vertices 1, 6, 7, 8.  Hence $hg$ is the identity, so $g= h^{-1} = h$.  Hence $\TSG_+(\Gamma) \leq \Z_2$.

Suppose $\TSG(\Gamma) \cong D_2 \cong \Z_2 \times \Z_2$.  This group has three non-trivial elements; since the product of two orientation-reversing homeomorphisms is orientation-preserving, at least one of the three is orientation-preserving.  And by the previous paragraph, at most one is orientation-preserving. So there are two orientation-reversing homeomorphisms $h$ and $g$, and the product $hg$. Since $hg$ is orientation-preserving, it interchanges vertices 6 and 7; hence one of $h$ or $g$ must interchange these vertices and the other one fixes them. But only two automorphisms of $P_8$ interchange vertices 6 and 7 and have order two: the permutations $(1)(8)(67)(23)(45)$ and $(1)(8)(67)(24)(35)$.  So two of the elements of $\TSG(\Gamma)$ induce these permutations on the vertices, and the third induces the product $(1)(8)(6)(7)(25)(34)$. In particular, the homeomorphism $h$ that induces $(1)(8)(6)(7)(25)(34)$ is orientation-reversing, and by Smith Theory has a fixed point set homeomorphic to $S^2$.  So vertices 2 and 5 are on opposite sides of the sphere, as are vertices 3 and 4. Without loss of generality, suppose vertices 2 and 3 are on one side of the sphere, and 4 and 5 are on the other side.  Then the edges $\{2,4\}$ and $\{3,5\}$ must pass through the sphere. So $h$ fixes a point on each of these edges, and hence must fix these edges setwise. But this contradicts that $h(\{2,4\}) = \{3,5\}$, since edges $\{2,4\}$ and $\{3,5\}$ are disjoint. Hence, it is impossible for $\TSG(\Gamma) \cong D_2$.

We conclude that $\TSG(\Gamma) \leq \Z_2$ as well.  It remains to show that we can realize $\Z_2$.  Consider the embedding $\Gamma$ of $P_8$ shown in Figure \ref{F:P8}, and the vertical axis through the vertices 1 and 8.  A half turn around this axis is an orientation-preserving symmetry of $\Gamma$ (realizing the permutation $(1)(8)(67)(23)(45)$). This is orientation-preserving, so $\TSG(\Gamma) = \TSG_+(\Gamma) \cong \Z_2$.
\end{proof}

\section{The graph $P_9$}\label{S:P9}

We consider the graph $P_9$ with the vertices labeled as in Figure \ref{F:P9}. To begin, we determine the automorphism group of the abstract graph $P_9$.

\begin{figure} [h]
$$\scalebox{.8}{\includegraphics{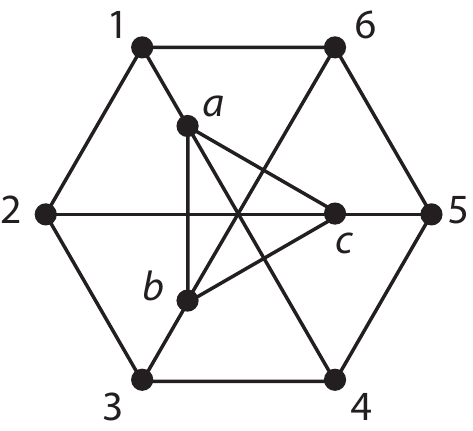}}$$
\caption{The graph $P_9$.}
\label{F:P9}
\end{figure}

\begin{theorem} \label{T:P9aut}
The automorphism group of $P_9$ is isomorphic to $D_6$.
\end{theorem}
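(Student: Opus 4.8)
The plan is to identify which sets of vertices must be preserved setwise by every automorphism, argue that the action on one ``controlling'' substructure determines the whole automorphism, and then show that controlling substructure has automorphism group $D_6$. First I would read off the degree sequence from Figure \ref{F:P9}: presumably $P_9$ has a set of vertices that form a distinguished $6$-cycle (the graph is obtained from $P_8$ by a $\nabla Y$ or $Y\nabla$ move, so one expects a hexagonal symmetry to appear), together with a smaller number of vertices of a different degree that are ``attached'' to the hexagon. Any automorphism permutes each degree class setwise. If the six-cycle vertices form a single degree class, the automorphism group of that induced $6$-cycle is $D_6$, and the plan is to show that (i) every automorphism of $P_9$ restricts to an automorphism of this $6$-cycle, and (ii) this restriction map is both injective and surjective, hence an isomorphism $\Aut(P_9) \cong D_6$.

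For surjectivity I would exhibit, for each of the $12$ symmetries of the hexagon, an actual automorphism of $P_9$ realizing it — typically this follows because the remaining vertices are canonically associated to geometric features of the hexagon (a vertex adjacent to a specific antipodal pair, or to a specific set of alternating vertices, etc.), so each dihedral symmetry of the hexagon extends uniquely. For injectivity I would argue that an automorphism fixing all six hexagon vertices must fix every remaining vertex, because each remaining vertex is uniquely determined by its set of neighbors among the hexagon vertices (the key point being that distinct ``extra'' vertices have distinct neighbor-sets within the hexagon). Combining (i) and (ii), the restriction homomorphism $\Aut(P_9) \to \Aut(C_6) = D_6$ is an isomorphism.

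The main obstacle I anticipate is bookkeeping: correctly reading the adjacencies from Figure \ref{F:P9}, pinning down exactly which vertices have which degrees, and verifying that the ``extra'' vertices are indeed distinguished from one another by their hexagon-neighborhoods rather than being interchangeable in some way that would inflate the group, or conversely rigidly attached in a way that would collapse it below $D_6$. In particular I would need to rule out the possibility that some reflection or rotation of the hexagon fails to extend (which would make $\Aut(P_9)$ a proper subgroup such as $\Z_6$ or $D_3$) — this requires checking that the pattern of attachments of the extra vertices is itself $D_6$-symmetric. Once the adjacency structure is laid out explicitly, the argument should be a short degree-count plus a uniqueness-of-extension check, entirely analogous to the proofs of Theorems \ref{T:P7aut} and \ref{T:P8aut}.
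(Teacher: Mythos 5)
Your proposal is correct and follows essentially the same route as the paper: restrict automorphisms to the induced hexagon of degree-three vertices, then observe that the three degree-four vertices are attached precisely to the antipodal pairs $\{1,4\}$, $\{2,5\}$, $\{3,6\}$, so each symmetry of the hexagon extends uniquely and the restriction map is an isomorphism onto $D_6$. The only content you leave pending (reading the exact adjacencies from Figure \ref{F:P9}) checks out exactly as you anticipated.
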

\begin{proof}
We first observe from Figure \ref{F:P9} that there are six vertices of degree three, labeled 1 through 6, and three vertices of degree four, labeled $a, b, c$. Every automorphism of $P_9$ must fix these two sets of vertices (setwise). The subgraph $H$ induced by vertices 1 through 6 is a hexagon, so $\Aut(H) = D_6$.  Every automorphism of $H$ permutes the pairs of antipodal vertices: $\{1,4\}$, $\{2,5\}$ and $\{3,6\}$.  Since the three vertices $a, b, c$ are each adjacent to the two vertices in one of these pairs (and none of the vertices in the other pairs), the permutation of these pairs determines the permutation of $a, b, c$. In other words, each automorphism of $H$ induces a unique automorphism of $P_9$.  Hence $\Aut(P_9) \cong \Aut(H) = D_6$.
\end{proof}

So every possible topological symmetry group for $P_9$ is isomorphic to a subgroup of $D_6$. We first show there is an embedding of $P_9$ whose orientation-preserving topological symmetry group is $D_6$ itself.

\begin{theorem}\label{T:P9}
A nontrivial group is realizable or positively realizable for $P_9$ if and only if it is one of: $D_6$, $D_3$, $D_2$, $\Z_6$, $\Z_3$, or $\Z_2$.
\end{theorem}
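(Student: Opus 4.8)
The plan is to combine the identification $\Aut(P_9)\cong D_6$ from Theorem~\ref{T:P9aut} with one highly symmetric embedding and the Subgroup Theorem. For the ``only if'' direction there is essentially nothing to do: for any embedding $\Gamma$ we have $\TSG_+(\Gamma)\le\TSG(\Gamma)\le\Aut(P_9)\cong D_6$, and the subgroups of $D_6$ up to isomorphism are exactly $D_6,D_3,D_2,\Z_6,\Z_3,\Z_2$ and the trivial group, so any nontrivial realizable group --- in particular any positively realizable group --- must appear on the list. Unlike the cases of $P_7$ and $P_8$, no further Smith-theoretic obstruction is needed here, since every subgroup of $\Aut(P_9)$ will turn out to be achievable.

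For the converse I would first construct a single embedding $\Gamma$ with $\TSG_+(\Gamma)=D_6$. Recall from the proof of Theorem~\ref{T:P9aut} that $P_9$ consists of a hexagon $H$ on the vertices $1,\dots,6$, a triangle on $a,b,c$, and six ``spoke'' edges joining each of $a,b,c$ to an antipodal pair of $H$ (namely $a$ to $\{1,4\}$, $b$ to $\{2,5\}$, $c$ to $\{3,6\}$). View $S^3$ as the union of two complementary solid tori with core geodesic circles $C_1$ and $C_2$, as in Figure~\ref{F:K33embedding1}: place the vertices $1,\dots,6$ evenly around $C_1$ and $a,b,c$ evenly around $C_2$, with $a$ on the axis of the hexagon through $1$ and $4$ and with the cyclic order of $a,b,c$ on $C_2$ compatible with that of the antipodal pairs on $C_1$. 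Let $r$ be the orientation-preserving homeomorphism of $S^3$ that rotates $C_1$ by $\tfrac{2\pi}{6}$ and $C_2$ by $\tfrac{2\pi}{3}$; it has order $6$ and induces $(123456)$ on $H$ together with the forced $3$-cycle $(abc)$. Let $m$ be the orientation-preserving involution obtained by simultaneously reflecting each of $C_1$ and $C_2$ (in coordinates $S^3\subset\mathbb{C}^2$, $m(z,w)=(\bar z,\bar w)$); it induces the hexagon reflection $(26)(35)$, fixing $1,4,a$ and interchanging $b,c$, and one checks $mrm^{-1}=r^{-1}$. Connecting the vertices by arcs chosen equivariantly --- the hexagon edges along $C_1$, the triangle edges along $C_2$, and the spokes routed through one solid torus as a single $\langle r\rangle$-orbit --- produces an embedding $\Gamma$ invariant under this $D_6=\langle r,m\rangle$ action, so $D_6\le\TSG_+(\Gamma)$; since $\TSG_+(\Gamma)\le\Aut(P_9)=D_6$, in fact $\TSG_+(\Gamma)=D_6$.

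Finally, $P_9$ is $3$-connected, and the hexagon edge $\{1,2\}$ is not pointwise fixed by any nontrivial element of $\TSG_+(\Gamma)=D_6$: an automorphism of $P_9$ fixing the two adjacent hexagon vertices $1$ and $2$ restricts to the identity on $H$, hence (since $\Aut(P_9)\cong\Aut(H)$) is trivial. By the Subgroup Theorem, for every subgroup $H\le D_6$ there is an embedding $\Gamma_H$ of $P_9$ with $\TSG_+(\Gamma_H)=H$. Taking $H$ to be $D_6,D_3,D_2,\Z_6,\Z_3,\Z_2$ in turn shows each is positively realizable, and hence --- by adding identical chiral knots to every edge --- also realizable, which completes the proof.

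I expect the only real difficulty to be the explicit equivariant embedding: one must verify that the six spoke arcs joining $C_2$ to $C_1$, which form a single orbit under $\langle r\rangle$, can be chosen pairwise disjoint and disjoint from the hexagon and triangle arcs. I would settle this by displaying the embedding in a figure; the rest is bookkeeping with the subgroup lattice of $D_6$ and a direct application of the Subgroup Theorem.
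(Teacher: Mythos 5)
Your proposal is correct and matches the paper's argument essentially step for step: the same embedding with the hexagon and triangle on complementary/orthogonal circles, a glide rotation inducing $(123456)(abc)$ together with an orientation-preserving involution fixing $1,4,a$ to generate $D_6 = \Aut(P_9)$, and then the Subgroup Theorem applied to the edge $\{1,2\}$ to positively realize (hence realize) every subgroup of $D_6$. Your explicit model $m(z,w)=(\bar z,\bar w)$ is just a coordinate version of the paper's half-turn about the circle through $1$, $a$, $4$, so there is no substantive difference.
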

\begin{proof}
We will show that every subgroup of $\Aut(P_9) = D_6$ is positively realizable, and hence also realizable. Consider the embedding $\Gamma$ shown in Figure \ref{F:P9D6}.  Here the cycle $\overline{123456}$ is embedded as a great circle of $S^3$, and the cycle $\overline{abc}$ is embedded as the great circle which is perpendicular to the disk bounded by the first great circle at the center of the disk. The vertices are equally spaced around each of the great circles.

\begin{figure} [h]
$$\scalebox{.7}{\includegraphics{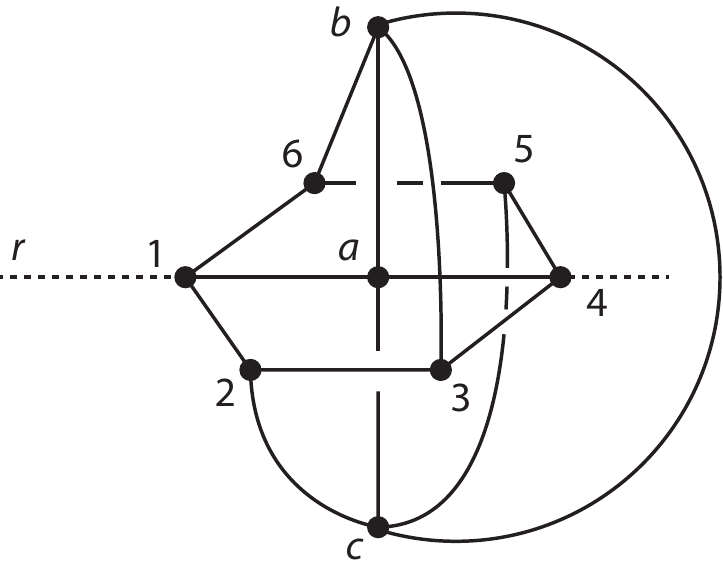}}$$
\caption{An embedding $\Gamma$ of $P_9$ with $\TSG(\Gamma) = D_6$.}
\label{F:P9D6}
\end{figure}

We consider the following two motions: first, the glide rotation $g$ which (1) rotates around the axis $\overline{abc}$ by 1/6 of a full turn, and (2) rotates around the axis $\overline{123456}$ by 1/3 of a full turn; and secondly, the rotation $r$ by a half turn around the axis through vertices $1$, $a$ and $4$ (as shown in Figure \ref{F:P9D6}).  Both $g$ and $r$ are orientation-preserving homeomorphisms. So $g$ induces the permutation $(abc)(123456)$ on the vertices, and $r$ induces the permutation $(a)(bc)(1)(4)(26)(35)$. It is easy to check that $gr = rg^{-1} = (ab)(c)(12)(36)(45)$, so these two motions generate a group isomorphic to $D_6$. Since $\TSG_+(\Gamma)$ must be a subgroup of $\Aut(P_9) = D_6$, this means $\TSG_+(\Gamma) \cong D_6$.

Since the edge $\{1,2\}$ is not pointwise fixed by any non-trivial element of the topological symmetry group, applying the Subgroup Theorem to $\Gamma$ shows that every subgroup of $D_6$ is also positively realizable.
\end{proof}

\section{Conclusion and Future Work}

Table \ref{Ta:TSG} summarizes our results, listing all groups which can be positively realized or realized for each of the graphs in the Petersen family.

\begin{table}[htbp]
\begin{center}
\begin{tabular}{|c|c|c|}
\hline
{\bf Graph} & {\bf Positively Realizable} & {\bf Realizable ({\em not} positively)} \\ \hline
$K_6$ & \parbox{2.5in}{\rule{0in}{.15in} $D_6, D_5, D_3, D_2, \Z_6, \Z_5, \Z_3, \Z_2$ \\ \rule[-.08in]{0in}{.1in}$D_3 \times D_3, D_3 \times \Z_3, \Z_3 \times \Z_3, (\Z_3 \times \Z_3) \semi \Z_2$} & $D_4, \Z_4, (D_3 \times D_3) \semi \Z_2, (\Z_3 \times \Z_3) \semi \Z_4$ \\ \hline
\rule[-.07in]{0in}{.2in} $K_{3,3,1}$ & $D_3$, $D_2$, $\mathbb{Z}_3$ and $\mathbb{Z}_2$ & $D_4$ and $\Z_4$ \\ \hline
\rule[-.07in]{0in}{.2in}$K_{4,4}^-$ & $D_6$, $D_3$, $D_2$, $\mathbb{Z}_6$, $\Z_3$ and $\mathbb{Z}_2$ & $D_4$ and $\Z_4$ \\ \hline
\rule[-.07in]{0in}{.2in}$P_7$ & $D_3$, $\Z_3$ and $\Z_2$ & $\Z_2 \times \Z_2$ \\ \hline
\rule[-.07in]{0in}{.2in}$P_8$ & $\Z_2$ & None \\ \hline
\rule[-.07in]{0in}{.2in}$P_9$ & $D_6$, $D_3$, $D_2$, $\Z_6$, $\Z_3$ and $\Z_2$ & None \\ \hline
\rule[-.07in]{0in}{.2in}$P_{10}$ & $D_5, D_3, \Z_5, \Z_3$ and $\Z_2$ & $\Z_5 \semi \Z_4$ and $\Z_4$ \\ \hline
\end{tabular}
\end{center}
\caption{Realizable topological symmetry groups for the Petersen family.}
\label{Ta:TSG}
\end{table} 

One motivation for this project was to investigate how $\nabla Y$ and $Y\nabla$ moves affect the set of realizable topological symmetry groups of a graph. So far, we do not see any pattern to the changes, but the sample size of 7 graphs is still very small. It would still be interesting to examine another $\nabla Y$ family of graphs, such as the Heawood family of 20 graphs bookended by the Heawood graph and $K_7$. Topologically, this family contains the smallest intrinsically knotted graphs \cite{lklo}, and the topological symmetry groups for $K_7$ \cite{fmn3} and the Heawood graph \cite{flw} are already known.

\begin{question}
Which groups are realizable (and positively realizable) for each of the graphs in the Heawood family? Are there any patterns in how topological symmetry groups behave under $\nabla Y$ and $Y \nabla$-moves?
\end{question}

As we saw with $K_{3,3}$, $K_{3,3,1}$ and $K_{4,4}^-$, it may be more useful to look at subgraphs, since the symmetries of the larger graph are often restricted by the symmetries of its subgraphs.  From this perspective, it would be useful to have a census of the realizable topological symmetry groups for all ``small'' 3-connected graphs -- for example, for all such graphs with 10 or fewer vertices.

\begin{question}
Which groups are realizable (and positively realizable) for each ``small'' 3-connected graph (e.g. 10 or fewer vertices)?
\end{question}

Finally, there are many infinite families of graphs whose topological symmetry groups would be interesting to explore. Previous papers have studied complete graphs \cite{fmn3}, complete bipartite graphs \cite{hmp} and M\"{o}bius ladders \cite{fl}. Another interesting family is the {\em generalized Petersen graphs}. The generalized Petersen graph $P(n, k)$ has vertices $\{u_0, u_1, \dots, u_{n-1}, v_0, v_1, \dots v_{n-1}\}$ and edges $\{u_i, u_{i+1}\}$, $\{u_i, v_i\}$ and $\{v_i, v_{i+k}\}$ for each $i$ (with the subscripts computed modulo $n$). The Petersen graph itself is $P(5,2)$.

\begin{question}
Which groups are realizable (and positively realizable) for the generalized Petersen graph $P(n,k)$?
\end{question}

These are just a few of the many possibilities for future research into topological symmetry groups.


\small

%



\end{document}